\documentclass[reqno,11pt]{amsart}
\usepackage[left=3cm,right=3cm,top=3cm,bottom=2.5cm, marginparwidth=2cm]{geometry}
\usepackage{graphicx} 
\usepackage[utf8]{inputenc}
\usepackage[T1]{fontenc}
\usepackage{amsmath,amssymb,mathrsfs,amsfonts}
\usepackage{amsthm}
\usepackage[all]{xy}
\usepackage[hypertexnames=false,backref=page,pdftex,
 	pdfpagemode=UseNone,
 	breaklinks=true,
 	extension=pdf,
 	colorlinks=true,
 	linkcolor=blue,
 	citecolor=red,
 	urlcolor=blue,
 ]{hyperref}

\newcounter{ct}
\counterwithin{ct}{section}
\newtheorem{lem}[ct]{Lemma}
\newtheorem{prop}[ct]{Proposition}
\newtheorem{thm}[ct]{Theorem}

\newtheorem{conj}[ct]{Conjecture}
\theoremstyle{definition}
\newtheorem{df}[ct]{Definition}
\theoremstyle{remark}
\newtheorem{rmk}[ct]{Remark}

\usepackage{nicematrix}

\newcommand{\R}{\mathbb{R}}
\newcommand{\Q}{\mathbb{Q}}
\newcommand{\Z}{\mathbb{Z}}

\newcommand{\Id}{\mathrm{Id}}

\newcommand{\HK}{hyperk\"ahler }

\newcommand{\Kah}{\mathrm{K\ddot{a}h}}
\newcommand{\Nef}{\mathrm{Nef}}

\newcommand{\Aut}{\mathrm{Aut}}

\title[transcendental BPF conjecture for Calabi-Yau manifolds]{A note on the transcendental basepoint-free conjecture for Calabi-Yau manifolds}
\author[B. Philippe]{Bastien Philippe}
\address{Université de Lorraine, CNRS, IECL, F-54000 Nancy, France}
\email{bastien.philippe@univ-lorraine.fr}

\begin{document}

\maketitle

\begin{abstract}
    In this note, we prove that the transcendental basepoint-free conjecture for Calabi-Yau manifolds holds if it holds for its \HK factors in its Beauville-Bogomolov decomposition. Based on a contraction theorem due to Bakker and Lehn, we show that the conjecture holds for a big and nef class $\alpha$ on a \HK manifold under a mild condition on the dimension of the space generated by classes of rational curves on which $\alpha$ vanishes. 
\end{abstract}

\section{Introduction}

Let $X$ be a compact K\"ahler manifold. A class $\alpha \in H^{1,1}(X, \R)$ is said to be \textit{semi-ample} if there exists a holomorphic contraction $\rho : X \to X'$ where $X'$ is a normal analytic space together with a K\"ahler class $\omega \in H^{1,1}_{\mathrm{BC}}(X')$ such that $\rho^*\omega = \alpha$. The goal of this note is to study the following conjecture due to Tosatti.

\begin{conj}[{\cite[Conjecture 6.1]{TosattiLimitsOfCY}, \cite[Conjecture 2.7]{tosatti2025ricciflatmetricscalabiyaumanifolds}}]\label{conj Tosatti}
    Let $X$ be a Calabi-Yau manifold and let $\alpha \in H^{1,1}(X, \R)$ be a big and nef class. Then $\alpha$ is semi-ample.
\end{conj}

Here, the term \textit{Calabi-Yau} manifold refers to a compact K\"ahler manifold $X$ such that $c_1(K_X) = 0 \in H^{1,1}(X, \R)$. The above conjecture can be seen as a special case of a conjectural generalization to the transcendental setting of Kawamata's basepoint-free theorem and can also be formulated for non-Calabi-Yau manifolds, see for example \cite[Conjecture 1.2]{FilipTosatti}. In the case where $X$ is projective and $\alpha$ is the class of a big and nef $\R$-divisor, Conjecture \ref{conj Tosatti} follows from the usual basepoint-free theorem (\cite[Theorem 3.9.1]{BCHM}). Conjecture \ref{conj Tosatti} is also known when $\dim(X) \leqslant 3$ (\cite{FilipTosatti, HoringAdjoint}) and when $X$ is projective (\cite{DasHaconTranscendentalMMPforProj}).

Our first result is that Conjecture \ref{conj Tosatti} can be reduced to the case of \HK manifolds. Let $X$ be a Calabi-Yau manifold. The Beauville-Bogomolov decomposition theorem (\cite{BeauvillePremièreClasse}) asserts that there exists a finite étale map $\pi : \widehat{X} \to X$ such that $\widehat{X}$ decomposes as a product of the form 
\[ \widehat{X} = T \times \prod_{i = 1}^{n} X_i \times \prod_{j = 1}^{m} Y_j \]
where $T$ is a torus, the $X_i$'s are \HK manifolds and the $Y_i$'s are strict Calabi-Yau manifolds. In the setting of Conjecture \ref{conj Tosatti}, since every big and nef class on a torus is K\"ahler we can assume that there is at least a \HK or strict Calabi-Yau factor in the Beauville-Bogomolov decomposition. In this case, the class $\pi^*\alpha$ induces big and nef classes on each factor of the decomposition of $\widehat{X}$ (Lemma \ref{lem decompo classes}).

\begin{prop}\label{prop reduction CY to HK}
    Let $X$ be a Calabi-Yau manifold and let $\alpha \in H^{1,1}(X, \R)$ be a big and nef class. Let 
    \[\pi : T \times \prod_{i = 1}^{n} X_i \times \prod_{j = 1}^{m} Y_j  \to X\]
    be a Beauville-Bogomolov decomposition of $X$, where the $X_i$'s are the \HK factors. Let $\alpha_i \in H^{1,1}_{\mathrm{BC}}(X_i)$ be the big and nef class induced by $\pi^*\alpha$ on $X_i$ (see Lemma \ref{lem decompo classes}). Assume that for all $1 \leqslant i \leqslant n$, $\alpha_i$ is semi-ample. Then $\alpha$ is semi-ample.
\end{prop}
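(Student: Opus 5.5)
Write $\widehat{X} = T \times \prod_i X_i \times \prod_j Y_j$ and $\widehat{\alpha} = \pi^*\alpha$. The plan has three steps: show $\widehat{\alpha}$ is semi-ample on $\widehat{X}$, then descend along the finite étale cover $\pi$.

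\emph{Step 1: decompose the class.} Since the strict Calabi-Yau factors $Y_j$ satisfy $h^{1,0}=h^{2,0}=0$ (for $\dim Y_j\geq 3$) and hyperkähler factors have $h^{1,0}=0$, the Künneth formula gives
\[H^{1,1}(\widehat X,\R)=H^{1,1}(T,\R)\oplus\bigoplus_i H^{1,1}(X_i,\R)\oplus\bigoplus_j H^{1,1}(Y_j,\R).\]
There are no mixed terms because $H^1$ of every non-torus factor vanishes. So, as in Lemma \ref{lem decompo classes}, $\widehat\alpha=\sum p^*\alpha_\bullet$ is a sum of pullbacks of big and nef classes $\alpha_T,\alpha_i,\beta_j$ under the projections.

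\emph{Step 2: each summand is semi-ample.} On the torus, $\alpha_T$ is Kähler. For each $Y_j$ we have $h^{2,0}(Y_j)=0$, so $Y_j$ is projective and $H^{1,1}(Y_j,\R)=NS(Y_j)_\R$. Hence $\beta_j$ is a big and nef $\R$-divisor class on a projective manifold with trivial canonical class, and it is semi-ample by the basepoint-free theorem \cite{BCHM} (or \cite{DasHaconTranscendentalMMPforProj}). For the $X_i$, semi-ampleness of $\alpha_i$ is the hypothesis. Taking the product of the resulting contractions $\rho_\bullet$ gives a contraction
\[\widehat\rho=\mathrm{id}_T\times\prod\rho_i\times\prod\rho'_j:\ \widehat X\to \widehat X'=T\times\prod X_i'\times\prod Y_j'.\]
$\widehat X'$ is normal, as a product of normal spaces, and $\widehat\rho$ has connected fibers. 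The class $\sum p^*\omega_\bullet$ is Kähler on $\widehat X'$: locally it is a sum of pullbacks of Kähler potentials under the projections, which gives a strictly psh smooth local potential. Its pullback is $\widehat\alpha$, so $\widehat\alpha$ is semi-ample.

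\emph{Step 3: descent along $\pi$.} This is the main obstacle. I would first replace $\pi$ by its Galois closure, which is again a finite étale cover by a product of the same type, so we may assume $\pi$ is Galois with group $G$ acting freely on $\widehat X$. Since $\widehat\alpha$ is $G$-invariant, the contraction $\widehat\rho$ is $G$-equivariant. Indeed, the fibers of $\widehat\rho$ are exactly the maximal connected subvarieties on which $\widehat\alpha$ restricts to zero: a curve $C$ is contracted if and only if $\widehat\alpha\cdot C=0$, because the class on $\widehat X'$ is Kähler. Then $g\circ\widehat\rho$ and $\widehat\rho$ have the same fibers, and by normality $G$ acts on $\widehat X'$ by biholomorphisms compatibly with $\widehat\rho$.

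Set $X'=\widehat X'/G$, which is a normal complex space. The map $\widehat\rho$ descends to a map $\rho:X\to X'$ with connected fibers, since fibers of $\rho$ are images of fibers of $\widehat\rho$. The remaining point is to produce a Kähler class $\omega$ on $X'$ with $\rho^*\omega=\alpha$. I would average the Kähler form over $G$ to get a $G$-invariant Kähler form $\widehat\omega'$ on $\widehat X'$, still representing the invariant class. Its local potentials can be chosen $G$-invariant near each orbit of the finite group $G$, by averaging potentials on $G$-stable neighbourhoods. They therefore descend to strictly psh continuous potentials on the quotient, which yields a Kähler class $\omega$ on $X'$ in the Bott-Chern sense.

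Finally, $\rho^*\omega$ and $\alpha$ have the same pullback to $\widehat X$. Since $\pi^*$ is injective on $H^{1,1}$ for a finite cover (because $\pi_*\pi^*=\deg\pi$), we get $\rho^*\omega=\alpha$, so $\alpha$ is semi-ample.
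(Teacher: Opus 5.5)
Your route is the paper's. You pass to the Galois closure, which is again of the form $T'\times\prod X_i\times\prod Y_j$ with the same $\alpha_i,\beta_j$, and split $\widehat\alpha$ by K\"unneth. You use the hypothesis on the $X_i$ and \cite{BCHM} on the $Y_j$, take the product contraction, and get $G$-equivariance of $\widehat\rho$ from the description of its fibres. You then pass to $\widehat X'/G$, descend the K\"ahler class by averaging, and finish with injectivity of $\pi^*$. Your averaging of invariant potentials on $G$-stable neighbourhoods is a fine substitute for the paper's symmetric-product construction.

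There is, however, a genuine gap at the end of Step 3. Freeness of $G$ on $\widehat X$ does not pass to $\widehat X'$: nothing in your argument prevents some $g\neq 1$ from stabilising a fibre of $\widehat\rho$, and then $g$ fixes the image point. At such points a $G$-invariant smooth strictly psh potential descends only to a \emph{continuous} function. Adding a pluriharmonic function cannot make it smooth: already $|z|^2$ on $\mathbb{C}$ descends to $|w|$ on $\mathbb{C}/\{\pm 1\}\cong\mathbb{C}_w$. So what you have built is a section of $\mathscr{C}^0_{X'}/\mathrm{PH}_{X'}$ with continuous strictly psh representatives. A K\"ahler class, in the sense used here, must instead be the class of a \emph{smooth} K\"ahler metric. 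Bridging this needs Varouchas' regularisation theorem \cite[Theorem 1]{VarouchasMathAnn}, which produces a smooth K\"ahler metric in the same Bott--Chern class. This is exactly the step the paper adds. The paper also cites \cite[Lemma 3.1.2]{VarouchasMathAnn} for the strict plurisubharmonicity of the descended functions, which you only assert. With these two inputs your final paragraph goes through.

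A smaller point concerns your claim that the $G$-average of $\widehat\omega$ ``still represents the invariant class''. This needs $g^*\widehat\omega=\widehat\omega$ in $H^{1,1}_{\mathrm{BC}}(\widehat X')$. The paper gets it from injectivity of $\widehat\rho^*$ (Proposition \ref{prop HP pullback}), using that $\widehat X'$ has canonical, hence rational, singularities because $\widehat\rho$ is crepant. In your argument you can simply drop the claim. By equivariance, $\widehat\rho^*$ of the averaged class equals $\frac{1}{|G|}\sum_{g} g^*\widehat\alpha=\widehat\alpha$, and that is all you use to get $\pi^*\rho^*\omega=\widehat\alpha$.
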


In the \HK case, we use a contraction theorem due to Bakker and Lehn (\cite{BakkerLehnAGlobalTorelli}) to establish the following partial result.

\begin{prop}\label{prop cas HK}
    Let $X$ be a \HK manifold and let $\alpha \in H^{1,1}(X, \R)$ be a big and nef class. Assume that
    \[ N := \mathrm{span}_\Q\{[C] \in H_2(X_i, \Z) \mid \text{$C \subset X_i$ is a rational curve and $[C] \cdot \alpha = 0$} \} \subset H_2(X_i, \Q)\]
    is such that $\dim(N) \leqslant b_2(X) - 5$. Then $\alpha$ is semi-ample. 
\end{prop}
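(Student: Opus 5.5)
The plan is to build, for the curves in $N$, a bimeromorphic contraction $\rho : X \to X'$ onto a normal symplectic variety by means of the contraction theorem of Bakker and Lehn (\cite{BakkerLehnAGlobalTorelli}). We then show that $\alpha$ descends to a K\"ahler class on $X'$. Throughout, we identify $H_2(X,\Q)$ with $H^2(X,\Q)$ via the Beauville--Bogomolov--Fujiki form $q_X$, so $N$ becomes a subspace $N^\vee \subset H^{1,1}(X,\R)\cap H^2(X,\Q)$. Since $\alpha$ is big and nef, $q_X(\alpha)>0$, and $\alpha$ is $q_X$-orthogonal to $N^\vee$. The Hodge index theorem for $q_X$ then shows that $q_X$ is negative definite on $N^\vee$.

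\textbf{Step 1: the deformation space.} Consider the subspace of the period domain on which all classes of $N^\vee$ remain of type $(1,1)$. Write $\Lambda = (N^\vee)^{\perp} \subset H^2(X,\Q)$, which has signature $(3, b_2(X)-3-\dim N)$. The corresponding period domain $\Omega_\Lambda$ has dimension $b_2(X)-2-\dim N \geqslant 3$ by the hypothesis $\dim N \leqslant b_2(X)-5$. By local Torelli, this gives a smooth family $\mathcal{X} \to S$ with $\dim S \geqslant 3$ over which $N^\vee$ stays algebraic. By rigidity of rational curves on \HK manifolds (their deformations are governed by their class being of type $(1,1)$), the rational curves spanning $N$ deform over all of $S$. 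The role of the bound is that $\Lambda$ is still indefinite with enough room for the following density argument: the points $t \in S$ where $\Lambda \cap H^{1,1}(X_t)$ contains a class of positive square are dense. At such points $X_t$ is projective, and $\alpha$ can be approximated by rational classes $\alpha_t'$ that are nef and big on $X_t$ and vanish exactly on $N$.

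\textbf{Step 2: contractions on projective fibres and spreading out.} On those projective fibres, the usual basepoint-free theorem \cite[Theorem 3.9.1]{BCHM} gives a contraction $X_t \to X_t'$ whose contracted curves span exactly $N$. This is where Bakker--Lehn enters. Their contraction theorem says that a bimeromorphic contraction of a \HK manifold extends to the locally trivial family over the locus where the contracted classes stay algebraic, and that it specializes to a contraction on every fibre where the same classes remain the only $\alpha$-trivial rational curves. Applying this at our original point yields $\rho : X \to X'$, with $X'$ a normal compact symplectic variety. Moreover, $\rho$ contracts precisely the rational curves whose class lies in $N$, and $\rho^* : H^2(X',\R) \to H^2(X,\R)$ identifies $H^2(X')$ with $(N^\vee)^\perp$.

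\textbf{Step 3: descending $\alpha$.} Since $\alpha \in (N^\vee)^{\perp}$, there is $\omega \in H^{1,1}_{\mathrm{BC}}(X')$ with $\rho^*\omega = \alpha$. It remains to show $\omega$ is K\"ahler. For this I would use the description of the K\"ahler cone of a primitive symplectic variety (Bakker--Lehn): it consists of classes in the positive cone that are positive on all rational curves. A rational curve $C' \subset X'$ lifts to a rational curve $C \subset X$ not contracted by $\rho$, so $[C]\notin N$. Nefness gives $\alpha\cdot C \geqslant 0$, and the definition of $N$ gives $\alpha \cdot C \neq 0$, hence $\omega\cdot C' = \alpha \cdot C > 0$. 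Also $q(\omega)=q(\alpha)>0$, so $\omega$ is K\"ahler and $\alpha$ is semi-ample.

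I expect the main obstacle to be Step 2. One must check that the hypotheses of the Bakker--Lehn theorem are met, in particular that the contraction obtained at nearby projective points specializes to $X$ itself rather than to some birational model. This is exactly where the codimension count from $\dim N \leqslant b_2(X)-5$ must be used carefully, together with the fact that the wall structure of the nef cone near $\alpha$ is determined by $N$.
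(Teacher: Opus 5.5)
Your overall plan matches the paper's: contract via Bakker--Lehn, then show that $\alpha$ descends to a K\"ahler class. However, the input that makes both steps work is missing. You never show that $\tau:=N^\perp$ is a face of $\Nef(X)$ with $\alpha$ in the relative interior of $\Nef(X)\cap\tau$. A priori, infinitely many walls $[C_k]^\perp$ with $[C_k]\notin N$ could accumulate at $\alpha$. In that case $\alpha$ would not be an interior point of $\Nef(X)\cap N^\perp$, and $N^\perp$ need not be a face at all. This face condition, together with $\tau\cap\mathrm{Pos}(X)\neq\emptyset$ and $\dim\tau\geqslant 3$, is exactly the hypothesis of Bakker--Lehn's theorem as stated in Theorem \ref{thm contraction BL}. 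Your paraphrase of their result, a contraction that ``specializes to every fibre where the same classes remain the only $\alpha$-trivial rational curves'', builds this point in rather than proving it.

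The paper obtains the face structure from the Amerik--Verbitsky boundedness of MBM squares (Proposition \ref{prop faces}). This applies because $N\neq 0$ (by Theorem \ref{thm cone Kahler}, if $\alpha$ is not K\"ahler) forces $b_2(X)\geqslant 6$. The paper then uses Lemma \ref{lem cone} to show that the minimal face through $\alpha$ equals $N^\perp$, and gets $\dim\tau=h^{1,1}(X)-\dim N\geqslant 3$. Your Steps 1--2 do not substitute for this. Approximating $\alpha$ on $X_t$ by rational nef classes vanishing exactly on $N$ needs the same face structure on $X_t$, where new MBM classes may appear. Also, $\alpha$ is generally not of type $(1,1)$ on $X_t$. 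You also leave open whether the contraction specializes to $X$ itself rather than to another birational model. Your closing remark that ``the wall structure near $\alpha$ is determined by $N$'' is precisely the statement that needs proof.

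Step 3 has a second gap. You use a description of $\Kah(X')$ for the singular $X'$ as the positive-cone classes that are positive on rational curves, attributed to Bakker--Lehn. No such result is available there. Remark \ref{rmk module} notes that they could not even identify their K\"ahler-type chambers with K\"ahler cones; the paper obtains that identification as a consequence of its argument. Lifting a rational curve lying in the image of the exceptional locus to a rational curve on $X$ is also not automatic. The paper instead tests $\omega$ against all of $\overline{\mathrm{NA}}(X')$. By Lemmas \ref{lem classes Kahler} and \ref{lem pushforward}, it suffices that $\rho_*T=0$ for every $T\in\overline{\mathrm{NA}}(X)$ with $\alpha\cdot T=0$. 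This follows from Lemma \ref{lem cone}: since $\alpha\in\mathrm{Int}(\Nef(X)\cap\tau)$, one gets $\tau\subset T^\perp$. Hence $T\in\tau^\perp$ is a real combination of contracted curves. So this step also rests on the face structure you did not establish.
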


Let us now discuss the proofs of these results. The proof of Proposition \ref{prop reduction CY to HK} relies on the following observations : \begin{itemize}
    \item for tori, Conjecture \ref{conj Tosatti} is automatically satisfied as every big and nef class is K\"ahler ;
    \item Calabi-Yau manfiolds carry no global holomorphic 2-forms, thus they are always projective and every $(1,1)$ class is the class of a $\R$-divisor. Conjecture \ref{conj Tosatti} thus follows from the basepoint-free theorem for $\R$-divisors on projective manifolds (\cite[Theorem 3.9.1]{BCHM}).
\end{itemize}
In particular, Proposition \ref{prop reduction CY to HK} amounts to showing that Conjecture \ref{conj Tosatti} is stable under product and finite étale morphism. This is to content of Section \ref{section proof reduction}. Let us point out that in \cite{HoringAdjoint} H\"oring has treated the case of K-trivial threefolds with at most terminal singularities, also by using the Beauville-Bogomolov decomposition. In his proof, he reduces to the case of a product of an elliptic curve with a K3 surface and obtains an explicit description as a product of the null locus associated to the corresponding big and nef class to show that the contraction descends under the quasi-étale map. Our proof differs from his in this aspect.

To prove Proposition \ref{prop cas HK}, we first observe that the assumption implies that $\alpha$ lies inside the interior of the positive cone with respect to the Beauville-Bogomolov form. We then use the description of the K\"ahler cone obtained by Huybrechts (\cite{HuybrechtsKahlerCone}) and Boucksom (\cite{BoucksomConeHK}) together with the boundedness of squares of MBM classes (\cite{AmerikVerbitskyannallesENS}) to conclude that $\alpha$ belongs to a face of the nef cone of $X$ defined as the orthogonal of a set of rational curves on $X$. We then use a contraction theorem of Bakker and Lehn (\cite{BakkerLehnAGlobalTorelli}) stating that, provided that the dimension of this face isn't too low, there exists a contraction morphism contracting precisely the curves defining the face. Finally, using techniques from the transcendental minimal model program (especially Lemma \ref{lem classes Kahler} and Lemma \ref{lem pushforward}), we show that there exists a K\"ahler class on the contracted variety pulling back to $\alpha$. This last part addresses a point made in \cite{BakkerLehnAGlobalTorelli} concerning the refined period mapping attached to the locally trivial marked moduli space of primitive symplectic varieties admitting crepant resolutions, see Remark \ref{rmk module}. 

We end this introduction with a few comments on the assumption in Proposition \ref{prop cas HK}. First, the maximal possible dimension for $N$ is $b_2(X) - 3$. This bound is however attained ; there are examples of contractions $\rho : X \to X'$ where $X$ is a \HK manifold and $X'$ is K\"ahler with $b_2(X') = 3$ (\cite[Remark 5.10]{BakkerLehnAGlobalTorelli}). The assumption is needed in order to apply \cite{AmerikVerbitskyannallesENS} and \cite{BakkerLehnAGlobalTorelli}, both of which ultimately relying on the deep work of Verbitsky on the orbits of the action of monodromy groups on moduli spaces of marked \HK manifolds and associated period domains (\cite{VerbitskyErgodic, VerbitskyErgodicErratum}). The contraction used in Proposition \ref{prop cas HK} is constructed in \cite{BakkerLehnAGlobalTorelli} as a small deformation of a contraction on a projective manifold, where the last one is produced by applying the usual basepoint-free theorem. It would be interesting to see if this approach consisting of obtaining the desired contraction as a deformation of one on a projective manifold could be carried further to fully show Conjecture \ref{conj Tosatti}. Obtaining the analogue of the surjectivity of the period map in \cite[Theorem 8.1]{HuybrechtsBasicResults} for the locally trivial marked moduli space of primitive symplectic varieties admitting crepant resolutions with arbitrary second Betti number (which is proved for $b_2 \geqslant 5$ in \cite{BakkerLehnAGlobalTorelli}) could be a key step in this approach. 

\subsection*{Acknowledgements}
The author wishes to express his gratitude to his advisors Beno\^it Cadorel and Matei Toma for their guidance and encouragements to write this note. 

\section{Preliminaries}

\subsection{Positivity for K\"ahler spaces}

Let $X$ be a normal compact complex space. We will consider forms and currents on such $X$, for this we refer to \cite{DemaillyMesureDeMA}. We denote by $\mathrm{PH}_X$ the sheaf of pluri-harmonic functions on $X$. This sheaf coincides with the sheaf of real parts of holomorphic functions on $X$ (\cite[Lemma 4.6.1]{BoucksomGuedjRegularizing}).

\begin{df}[{\cite[Definition 3.1]{HoringPeternellMMP3fold}, see also \cite[Definition 4.6.2]{BoucksomGuedjRegularizing}}]
    We define the \textit{Bott-Chern cohomology} by $H^{1,1}_{\mathrm{BC}}(X) := H^1(X, \mathrm{PH}_X)$.
\end{df}

The short exact sequence associated to the quotient sheaf $\mathscr{C}^\infty_X/\mathrm{PH}_X$ gives rise to a natural map $H^0(X, \mathscr{C}^\infty_X/\mathrm{PH}_X) \to H^{1,1}_{\mathrm{BC}}(X)$ which is surjective (\cite{BoucksomGuedjRegularizing}). Moreover, by considering the short exact sequence 
\[ 0 \to \underline{\R} \xrightarrow{i \cdot} \mathcal{O}_X \xrightarrow{\mathrm{Re}} \mathrm{PH}_X \to 0 \]
we get a map $H^{1,1}_{\mathrm{BC}}(X) \to H^2(X, \R)$. If $X$ has rational singularities, then this map is injective (\cite{BoucksomGuedjRegularizing}) and its image coincides with $H^{1,1}(X, \R) := H^{1,1}(X) \cap H^2(X, \R)$ where $H^{1,1}(X)$ is the $(1,1)$ part of the pure Hodge structure of weight 2 on $H^2(X, \Z)^{\mathrm{tf}}$ (\cite[Lemma 2.1]{BakkerLehnAGlobalTorelli}). In this case, we can define the intersection product on $H^{1,1}_{\mathrm{BC}}(X)$ via the cup-product on $H^2(X, \R)$ (\cite[Remark 3.7]{HoringPeternellMMP3fold}). 

\begin{df}
    A \textit{K\"ahler metric} is by definition an element of $H^0(X, \mathscr{C}^\infty_X/\mathrm{PH}_X)$ that can be represented by a family $(U_i, \varphi_i)_{i \in I}$ such that for all $i \in I$, $\varphi_i$ is a smooth strictly plurisubharmonic function defined on $U_i$. The space $X$ will be called \textit{K\"ahler} if it supports a K\"ahler metric and a class of $H^{1,1}_{\mathrm{BC}}(X)$ will be called \textit{K\"ahler} if it is the image of a K\"ahler metric under the natural map $H^0(X, \mathscr{C}^\infty_X/\mathrm{PH}_X) \to H^{1,1}_{\mathrm{BC}}(X)$.
\end{df}

The set of K\"ahler classes inside $H^{1,1}_{\mathrm{BC}}(X)$ will still be denoted by $\Kah(X)$ and is, as in the smooth case, an open convex cone inside $H^{1,1}_{\mathrm{BC}}(X)$ (\cite[Corollary 3.7]{GrafKirschnerFiniteQuotients}). When $X$ is a K\"ahler space, we will also use weaker positivity notions for classes in $H^{1,1}_{\mathrm{BC}}(X)$. A class in $H^{1,1}_{\mathrm{BC}}(X)$ is said to be \textit{nef} if it belongs to $\Nef(X) := \overline{\Kah(X)}$. A nef class $\alpha \in H^{1,1}_{\mathrm{BC}}(X)$ will be called \textit{big} if $\alpha^{\dim(X)} > 0$ (see \cite[Lemma 2.35]{DasHaconPaun4-dimensional} for the equivalence with the usual notion).

\begin{df}[{\cite[Definition 3.8]{HoringPeternellMMP3fold}}]
    Let $X$ be a normal compact complex space. We define $N_1(X)$ to be the space of real currents of bidimension $(1,1)$ modulo the equivalence relation given by $T \sim T'$ if and only if for $T(\eta) = T'(\eta)$ for all form all real closed $(1,1)$ form $\eta$ with local potentials. We denote by $\overline{\mathrm{NA}}(X)$ the closed cone generated by positive currents inside $N_1(X)$. 
\end{df}

Here $\overline{\mathrm{NA}}(X)$ play the role of the cone of curves for complex analytic spaces that are not necessarily projective. If $X$ has at worst rational singularities we have an isomorphism $\Psi : H^{1,1}_{\mathrm{BC}}(X) \to N_1(X)^*$ (\cite[Proposition 3.9]{HoringPeternellMMP3fold}). 

\begin{lem}[{\cite[Lemma 2.3]{DasHaconTranscendentalMMPforProj}}]\label{lem classes Kahler}
    Let $X$ be a normal compact K\"ahler sapce with at most rational singularities. Then $\Nef(X)$ and $\overline{\mathrm{NA}}(X)$ are dual to each other via $\Psi$. In particular, a class $\alpha \in H^{1,1}_{\mathrm{BC}}(X)$ is K\"ahler if and only $\alpha \cdot T > 0$ for all $T \in \overline{\mathrm{NA}}(X) \setminus \{0\}$. 
\end{lem}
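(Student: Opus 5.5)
The plan is to prove the duality $\Nef(X) = \overline{\mathrm{NA}}(X)^\vee$ inside the finite-dimensional space $H^{1,1}_{\mathrm{BC}}(X)$, identified with $N_1(X)^*$ via the isomorphism $\Psi$. Since both cones are closed and convex and the ambient spaces are finite dimensional, biduality gives the other direction $\overline{\mathrm{NA}}(X) = \Nef(X)^\vee$ for free. The Kähler statement then follows from the duality. The set $\Kah(X)$ is an open convex cone whose closure is $\Nef(X)$, so $\Kah(X) = \mathrm{int}\,\Nef(X)$. For a closed convex cone $C$ in a finite-dimensional space whose dual contains an interior point, $\mathrm{int}(C^\vee)$ is exactly the set of functionals that are strictly positive on $C \setminus \{0\}$; here $C^\vee = \Nef(X)$ and its interior $\Kah(X)$ is nonempty. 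This is the compactness-of-the-base argument: intersect $\overline{\mathrm{NA}}(X)$ with a sphere and use that it is compact.

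\textbf{Inclusion $\Nef(X) \subset \overline{\mathrm{NA}}(X)^\vee$.} Let $\alpha$ be Kähler and represent it by a Kähler metric, i.e. a smooth closed $(1,1)$-form $\omega$ with local strictly psh potentials. For a positive current $T$ of bidimension $(1,1)$ we have $\Psi(\alpha)(T) = T(\omega) \geq 0$ by positivity of $T$. This extends to $\overline{\mathrm{NA}}(X)$ by continuity and linearity, and to $\Nef(X)$ by taking closures.

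\textbf{Inclusion $\overline{\mathrm{NA}}(X)^\vee \subset \Nef(X)$.} This is the main step. By finite-dimensional biduality it is equivalent to $\Nef(X)^\vee \subset \overline{\mathrm{NA}}(X)$. Suppose $\ell \in N_1(X)$ is nonnegative on $\Kah(X)$ but $\ell \notin \overline{\mathrm{NA}}(X)$. Hahn–Banach in $N_1(X) \cong H^{1,1}_{\mathrm{BC}}(X)^*$ gives a class $\eta$ with $\eta \geq 0$ on $\overline{\mathrm{NA}}(X)$ and $\ell(\eta) < 0$. I would then show that such an $\eta$ is nef, which contradicts $\ell \geq 0$ on $\Nef(X)$.

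Take a smooth representative of $\eta$ with local potentials. Testing against currents of integration along (pieces of) curves and against Dirac-type positive currents $\delta_x\, i v \wedge \bar v$ pushed forward through local embeddings should force the representative to be pointwise semipositive in a suitable sense. One then wants to conclude that $\eta + \varepsilon \omega$ is Kähler for every $\varepsilon > 0$. That gives $\eta \in \Nef(X)$, and hence $\ell(\eta) = \lim_{\varepsilon \to 0} \ell(\eta + \varepsilon\omega) \geq 0$, a contradiction.

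\textbf{Main obstacle.} The delicate point is this last step on a singular space. Semipositivity of $\eta$ on positive currents must be upgraded to the statement that, for every $\varepsilon > 0$, $\eta + \varepsilon\omega$ has local potentials that extend to smooth strictly psh functions in local embeddings. I would first try to avoid pointwise arguments entirely. The idea is to replace the Dirac currents by a Demailly-type regularization, as in \cite{BoucksomGuedjRegularizing}, or to use the characterization of nef classes on spaces with rational singularities as those admitting representatives with arbitrarily small negative part. Failing that, I would pass to a resolution $\mu : \widetilde{X} \to X$. Nefness of $\mu^*\eta$ follows from the smooth case, via Demailly–Păun type duality, because positive currents on $\widetilde{X}$ push forward to positive currents on $X$. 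One then descends along $\mu$, using rationality of the singularities and the injectivity $H^{1,1}_{\mathrm{BC}}(X) \hookrightarrow H^2(X,\R)$.
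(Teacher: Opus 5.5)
The paper does not prove this lemma; it quotes it from Das--Hacon, so your argument has to stand on its own. Several parts are fine: the inclusion $\Nef(X)\subset\overline{\mathrm{NA}}(X)^\vee$, the identification $\Kah(X)=\mathrm{Int}\,\Nef(X)$, and the deduction of the strict-positivity criterion from the duality (via compactness of a base of $\overline{\mathrm{NA}}(X)$).

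The hard inclusion $\overline{\mathrm{NA}}(X)^\vee\subset\Nef(X)$, however, is not established. Your biduality/Hahn--Banach paragraph is circular. After it you still have to show that a class nonnegative on $\overline{\mathrm{NA}}(X)$ is nef, which is the inclusion itself. The pointwise idea cannot work either. $N_1(X)$ has to be built from \emph{closed} currents, since otherwise the pairing with Bott--Chern classes (and hence $\Psi$) is not defined. The current $\delta_x\, iv\wedge\bar v$ is not closed, so testing against it gives no information about the class $\eta$. Moreover, nef classes need not contain any smooth semipositive representative (Demailly--Peternell--Schneider), so pointwise semipositivity is false in general.

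Your resolution route is the right one, and its first half is correct. Take a K\"ahler resolution $\mu:\widetilde X\to X$ and a positive closed current $T$ on $\widetilde X$. Then $\mu^*\eta\cdot T=\eta\cdot\mu_*T\geqslant 0$, so $\mu^*\eta$ is nef by the Demailly--P\u{a}un numerical criterion; the currents $[Y]\wedge\omega^{\dim Y-1}$ are positive, closed and of bidimension $(1,1)$.

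The gap is the final sentence, the descent from $\widetilde X$ to $X$. Rationality of the singularities and the injectivity $H^{1,1}_{\mathrm{BC}}(X)\hookrightarrow H^2(X,\R)$ only tell you that $\eta$ is determined by $\mu^*\eta$; they carry no positivity information. To show $\eta+\varepsilon\omega$ is K\"ahler you must produce smooth strictly psh local potentials on $X$ itself. You cannot get them by pushing down from $\widetilde X$: $\mu^*(\eta+\varepsilon\omega)$ has degree zero on every $\mu$-exceptional curve, so it is never K\"ahler on $\widetilde X$ unless $\mu$ is an isomorphism.

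What is missing is a genuinely analytic input. One option is P\u{a}un's theorem (Math.\ Ann.\ 1998) that nefness on a compact complex space can be detected after pullback along a surjective holomorphic map. The other is a Sullivan-type Hahn--Banach argument run directly on $X$ with forms having local potentials; this naturally produces $\partial\bar\partial$-closed rather than closed currents, so it needs further work. Until one of these is supplied, the key inclusion remains unproved.
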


\subsection{Contractions}

\begin{df}
    A \textit{contraction} is a proper and surjective holomorphic map $\rho : X \to X'$ such that $X$ and $X'$ are normal and $\rho$ has connected fibers. 
\end{df}

In the case where $X$ and $X'$ both have at worst rational singularities, cohomology is well-behaved under contraction.

\begin{prop}[{\cite[Lemma 3.3]{HoringPeternellMMP3fold}, \cite[Lemma 2.1]{BakkerLehnAGlobalTorelli}}]\label{prop HP pullback}
    Let $\rho : X \to X'$ be a contraction. Assume that $X$ and $X'$ have at most rational singularities. Then the morphism $\rho^* : H^{1,1}_{\mathrm{BC}}(X') \to H^{1,1}_{\mathrm{BC}}(X)$ is injective and its image is exactly the set of classes inside $H^{1,1}_{\mathrm{BC}}(X)$ that vanish when intersected with curves that are contracted by $\rho$.
\end{prop}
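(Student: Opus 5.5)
The plan is to compare the Bott--Chern cohomologies through the Leray spectral sequence for the sheaf $\mathrm{PH}_X$ along $\rho$.

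\textbf{Step 1: $\rho_*\mathrm{PH}_X = \mathrm{PH}_{X'}$.} Since $\rho$ is proper with connected fibers and $X'$ is normal, we have $\rho_*\mathcal{O}_X = \mathcal{O}_{X'}$. Since $\mathrm{PH}$ is the sheaf of real parts of holomorphic functions, it follows that $\rho_*\mathrm{PH}_X = \mathrm{PH}_{X'}$.

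\textbf{Step 2: the Leray sequence.} The five-term exact sequence of low degree reads
\[ 0 \to H^1(X', \mathrm{PH}_{X'}) \xrightarrow{\rho^*} H^1(X, \mathrm{PH}_X) \to H^0(X', R^1\rho_*\mathrm{PH}_X). \]
This already gives injectivity of $\rho^* : H^{1,1}_{\mathrm{BC}}(X') \to H^{1,1}_{\mathrm{BC}}(X)$. It also shows that a class $\alpha$ lies in the image if and only if its germ in $R^1\rho_*\mathrm{PH}_X$ vanishes at every point $x' \in X'$. Concretely, this means $\alpha|_{\rho^{-1}(U)} = 0$ in $H^1(\rho^{-1}(U), \mathrm{PH})$ for arbitrarily small Stein neighbourhoods $U$ of $x'$. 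One direction of the image statement is immediate: a pulled-back class has zero degree on any curve contracted by $\rho$, by the projection formula.

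\textbf{Step 3: reduce the local vanishing to topology of the fibre.} First show $R^i\rho_*\mathcal{O}_X = 0$ for $i>0$. Take a resolution $\mu : \tilde X \to X$; then $\rho\circ\mu$ is a resolution of $X'$. Rationality of $X'$ gives $R^i(\rho\mu)_*\mathcal{O} = 0$ for $i>0$, and rationality of $X$ gives $R\mu_*\mathcal{O}_{\tilde X} = \mathcal{O}_X$. The Grothendieck spectral sequence then yields $R^i\rho_*\mathcal{O}_X = 0$. Next apply the sequence $0 \to \underline{\R} \to \mathcal{O} \to \mathrm{PH} \to 0$ on $V := \rho^{-1}(U)$ with $U$ Stein. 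We have $H^i(V, \mathcal{O}) = H^0(U, R^i\rho_*\mathcal{O}) = 0$ for $i = 1, 2$, so $H^1(V, \mathrm{PH}) \cong H^2(V, \R)$. For $U$ small, $V$ retracts onto the fibre $F = \rho^{-1}(x')$, so $H^2(V,\R) \cong H^2(F,\R)$. It therefore remains to show the following: if $\alpha|_F \in H^2(F,\R)$ has zero degree on every curve contained in $F$, then $\alpha|_F = 0$.

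\textbf{Step 4: the main obstacle.} This last implication is the step I expect to be hardest. I would argue along the lines of Kollár--Mori's treatment of contractions of rational-singularity type.
\begin{itemize}
\item By the theorem on formal functions, the vanishing $R^i\rho_*\mathcal{O}_X = 0$ gives $H^1(F_n,\mathcal{O}) = H^2(F_n,\mathcal{O}) = 0$ for the infinitesimal neighbourhoods $F_n$ of $F$.
\item The exponential sequence then identifies $H^2(F,\Z)$ with $\mathrm{Pic}(F)$. Moreover $H^2(F,\R) = \mathrm{Pic}(F)\otimes\R$ and line bundles extend uniquely to $V$.
\item So we may write $\alpha|_F$ as a real combination of line bundles. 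Choosing a $\Q$-basis of the relevant finite-dimensional space, we reduce to a single line bundle $L$ on $F$ with $L\cdot C = 0$ for all curves $C \subset F$.
\item It remains to show such an $L$ is torsion, hence zero in real cohomology. I would prove this using that $F$ is a compact space whose components are Moishezon, which follows from the relative projectivity obtained from $H^2(F,\mathcal{O})=0$. On such $F$, numerical triviality against all curves implies the class vanishes in $H^2(F,\R)$, by duality between $H^2(F,\R)$ and the span of curve classes in $H_2(F,\R)$. That span is all of $H_2(F,\R)$ because $H^2(F,\R)$ is spanned by line bundles whose degrees are detected on curves.
\end{itemize}
If this duality step proves delicate, I would instead pass to a resolution of $F$ and use Hodge theory there.
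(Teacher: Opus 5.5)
Your Steps 1--3 are the standard reduction behind the cited \cite[Lemma 3.3]{HoringPeternellMMP3fold}; the paper itself only quotes the result. You take the Leray sequence for $\mathrm{PH}$, then use $R^i\rho_*\mathcal{O}_X=0$ to identify the stalk of $R^1\rho_*\mathrm{PH}_X$ at $x'$ with $H^2(F,\R)$.

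\textbf{The gap is Step 4.} It carries the whole content of the image statement, and your argument there is circular. The sentence ``curve classes span $H_2(F,\R)$ because $H^2(F,\R)$ is spanned by line bundles whose degrees are detected on curves'' presupposes exactly what must be shown: that a line bundle of degree $0$ on every curve of $F$ has $c_1=0$ in $H^2(F,\Q)$.

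This does not follow from $F$ having Moishezon (even projective) components plus duality. Let $F=S_1\cup_E S_2$ be two copies of $\mathbb{P}^2$ glued along a smooth cubic $E$. Mayer--Vietoris embeds $H^1(E,\R)\cong\R^2$ into $H^2(F,\R)$. These classes restrict to zero on both components, so they vanish on every curve. In particular your fallback, pulling back to a resolution of $F$, is not injective on $H^2(F)$ either. What excludes this situation is Hodge theory on $F$.

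One way to close the gap:
\begin{itemize}
\item By Step 3, $\mathrm{Pic}(V)\cong H^2(V,\Z)\cong H^2(F,\Z)$. By your $\Q$-basis reduction it then suffices to treat a line bundle $L$ on $V$ with $L\cdot C=0$ for all curves $C\subset F$.
\item Since $\rho$ is bimeromorphic, the components of $F$ are Moishezon (Hironaka's Chow lemma). On projective resolutions $\tilde F_i$ of them, $L$ pulls back to numerically trivial bundles. Their Chern classes vanish in $H^2(\tilde F_i,\Q)$ by the Hodge index theorem.
\item By Deligne, $\ker\bigl(H^2(F,\Q)\to\bigoplus_i H^2(\tilde F_i,\Q)\bigr)=W_1H^2(F,\Q)$.
\item Meanwhile $c_1(L|_F)\in\mathcal{F}^1H^2(F,\mathbb{C})$, where $\mathcal{F}^\bullet$ is the Hodge filtration. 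Indeed, the projection onto $\mathrm{Gr}^0_{\mathcal{F}}$ factors through $H^2(F,\mathcal{O}_F)$, where $c_1(L|_F)$ maps to $0$ by the exponential sequence.
\item A real class in $W_1\cap\mathcal{F}^1$ is zero: its image in $\mathrm{Gr}^W_1$ is real of type $(1,0)$, and $\mathcal{F}^1W_0=0$. Hence $c_1(L|_F)=0$.
\end{itemize}
Alternatively, quote Koll\'ar--Mori's results on line bundles near a fibre (\S 12.1 of \emph{Classification of three-dimensional flips}), the standard reference for this step.

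\textbf{Smaller points.}
\begin{itemize}
\item Step 3 uses that $\rho\circ\mu$ resolves $X'$, i.e.\ that $\rho$ is bimeromorphic. Say so explicitly, because for general contractions the statement is false. Take $\mathrm{pr}_C\colon S\times C\to C$ with $S$ a K3 surface containing no curves: every class in $H^{1,1}(S\times C,\R)$ vanishes on contracted curves, but $\mathrm{pr}_C^*$ has one-dimensional image. The paper only applies the result to bimeromorphic $\rho$.
\item In Step 1, a section of $\rho_*\mathrm{PH}_X$ over $U$ is only locally a real part, so you must show the local holomorphic primitives glue. The function is constant on the compact connected fibre, so the primitives are locally constant along $F$. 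Hence the periods of the harmonic conjugate vanish on loops in $F$, and these loops generate $H_1(\rho^{-1}(U))$. Then use $\rho_*\mathcal{O}_X=\mathcal{O}_{X'}$.
\item The theorem on formal functions only gives $\varprojlim_n H^i(F_n,\mathcal{O}_{F_n})=0$, not vanishing for each $n$.
\item $H^2(F,\mathcal{O}_F)=0$ does not make $F$ Moishezon: Hopf surfaces have $p_g=0$.
\end{itemize}
The route above needs neither of the last two claims.
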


We will also use the following. 

\begin{lem}[{\cite[Lemma 2.1]{DasHaconTranscendentalMMPforProj}}]\label{lem pushforward}
    Let $X$ and $X'$ be normal K\"ahler spaces with at most rational singularities and $\rho : X \to X'$ be a contraction. Then $\overline{\mathrm{NA}}(X') = \rho_*\overline{\mathrm{NA}}(X)$. 
\end{lem}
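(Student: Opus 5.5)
The plan is to prove the two inclusions separately. First, $\rho_*\overline{\mathrm{NA}}(X) \subset \overline{\mathrm{NA}}(X')$. Since $\rho$ is proper, the pushforward $\rho_*T$ of a real current $T$ of bidimension $(1,1)$ is again a current of bidimension $(1,1)$. It is positive whenever $T$ is, because $\rho^*$ of a positive test form is positive. If $\eta$ is a real closed $(1,1)$-form with local potentials on $X'$, then $\rho^*\eta$ is again such a form on $X$, so $(\rho_*T)(\eta) = T(\rho^*\eta)$. Hence $\rho_*$ respects the relation $\sim$ and descends to a linear map $\rho_* : N_1(X) \to N_1(X')$. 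This map is, via $\Psi$ on both sides, the transpose of $\rho^* : H^{1,1}_{\mathrm{BC}}(X') \to H^{1,1}_{\mathrm{BC}}(X)$. As a linear map between finite-dimensional spaces it is continuous, so it sends the cone generated by positive currents into $\overline{\mathrm{NA}}(X')$, and the inclusion follows. By Proposition \ref{prop HP pullback}, $\rho^*$ is injective, so $\rho_*$ is surjective on $N_1$; this will be used below.

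For the reverse inclusion I would argue by duality, using Lemma \ref{lem classes Kahler} on both $X$ and $X'$. Suppose $\rho_*\overline{\mathrm{NA}}(X)$ is closed; this is proved in the last paragraph. Then, if the inclusion were strict, Hahn--Banach would give $\beta \in H^{1,1}_{\mathrm{BC}}(X')$ that is nonnegative on $\rho_*\overline{\mathrm{NA}}(X)$ but negative on some $S \in \overline{\mathrm{NA}}(X')$. Since $\rho^*\beta \cdot T = \beta \cdot \rho_*T \geq 0$ for all $T \in \overline{\mathrm{NA}}(X)$, Lemma \ref{lem classes Kahler} shows that $\rho^*\beta$ is nef on $X$. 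The remaining task is the implication ``$\rho^*\beta$ nef $\Rightarrow$ $\beta$ nef on $X'$'', which contradicts $\beta\cdot S<0$ by Lemma \ref{lem classes Kahler} on $X'$.

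To get this implication I would avoid the duality and lift currents directly. Every positive current of bidimension $(1,1)$ on $X'$ is a limit of finite sums of Dirac-type currents $\delta_{x'} \otimes (v \wedge \bar v)$ with $v$ a tangent vector at $x'$. Pairing with forms with local potentials only sees these through $i\partial\bar\partial$ of the potentials. Let $U' \subset X'$ be the dense Zariski-open set over which $\rho$ is an isomorphism onto smooth points. There such a Dirac current is tautologically $\rho_*$ of the corresponding one on $X$. Near points of $X'\setminus U'$, I would use surjectivity of $\rho$ together with a limiting argument: approximate $x'$ by points of $U'$, and use that the pairing with $\eta$ is continuous in $(x',v)$ when $\eta$ has continuous local potentials. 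This shows $\overline{\mathrm{NA}}(X')$ is contained in the closure of $\rho_*\overline{\mathrm{NA}}(X)$.

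The main obstacle is the closedness of $\rho_*\overline{\mathrm{NA}}(X)$, i.e.\ removing the closure above. The image of a closed cone under a linear map need not be closed, and here the kernel of $\rho_*$ does meet $\overline{\mathrm{NA}}(X)$ nontrivially, through the classes of contracted curves. I would first try a mass-normalisation argument. Take $T_k \in \overline{\mathrm{NA}}(X)$ with $\rho_*T_k \to S$, represented by positive currents, and fix a Kähler form $\omega$ on $X$. If the masses $\int T_k\wedge\omega$ stay bounded, weak compactness gives a limit $T$ with $\rho_*T = S$. If they are unbounded, I would subtract the part of $T_k$ supported on the exceptional locus, which is pushed to currents supported on $\rho(\mathrm{Exc})$. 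That part would then be dealt with by the same argument on the restricted map from the exceptional locus to its image, inducting on dimension. This step is where I expect the real work to lie.
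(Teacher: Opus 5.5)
The paper gives no proof of this lemma; it is quoted from \cite[Lemma 2.1]{DasHaconTranscendentalMMPforProj}. Judged on its own, your proposal has a genuine gap.

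Your first paragraph is correct: $\rho_*$ descends to $N_1$, is the transpose of $\rho^*$, gives $\rho_*\overline{\mathrm{NA}}(X)\subset\overline{\mathrm{NA}}(X')$, and is surjective on $N_1$. The Hahn--Banach reduction is also correct: granted closedness, the reverse inclusion is equivalent to ``$\rho^*\beta$ nef $\Rightarrow$ $\beta$ nef''. But your argument for that implication does not work.

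The currents entering $N_1$ and $\overline{\mathrm{NA}}$ must be \emph{closed} (as in \cite[Definition 3.8]{HoringPeternellMMP3fold}). For a non-closed $T$, the number $T(\eta)$ depends on the form $\eta$ and not only on its Bott--Chern class. Then $N_1$ would be infinite-dimensional, and neither $\Psi$ nor Lemma \ref{lem classes Kahler}, which you use, would make sense. The currents $\delta_{x'}\otimes(v\wedge\bar v)$ are not closed: their value $\eta_{x'}(v,\bar v)$ can be given either sign by replacing $\eta$ with $\eta+dd^c f$. So they define no elements of $N_1(X')$, and approximating a closed positive current by sums of them says nothing about $\beta\cdot S$.

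In addition, the open set $U'$ presupposes that $\rho$ is bimeromorphic. The lemma is stated for arbitrary contractions, fibrations included, where no such $U'$ exists. Even in the bimeromorphic case, the lifted tangent vectors may blow up near $X'\setminus U'$, which is the same mass problem you postpone. What is really missing is an analytic descent of nefness: from almost-positive representatives of $\rho^*\beta$ on $X$ one must produce, for every $\varepsilon>0$, K\"ahler metrics with local potentials on the singular space $X'$ in the class $\beta+\varepsilon\omega'$. Nothing in your sketch does this.

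The closedness of $\rho_*\overline{\mathrm{NA}}(X)$ is also left open, and it is not a technicality. As you observe, $\ker\rho_*$ meets $\overline{\mathrm{NA}}(X)$, and convexity alone then cannot give closedness, so geometric input is indispensable.

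The induction you suggest does not fit the framework. The exceptional locus $E$ and its image $\rho(E)$ need be neither normal nor have rational singularities. So $\rho|_E$ is not a contraction in the sense of the paper, and neither Lemma \ref{lem classes Kahler} nor Proposition \ref{prop HP pullback} is available for it.

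You also do not explain why the part of $T_k$ off $E$ has bounded mass with respect to a K\"ahler form on $X$. That needs a genuine estimate on strict transforms of positive closed currents, for instance via a blow-up dominating $\rho$ with a relatively ample divisor of the form $-F$, $F$ effective and exceptional, combined with Lelong-number and Skoda--El Mir arguments. This step, too, has no meaning when $\rho$ has positive-dimensional general fibres.

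As it stands, the proposal proves only the inclusion $\rho_*\overline{\mathrm{NA}}(X)\subset\overline{\mathrm{NA}}(X')$.
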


Let us now determine the behavior of the K\"ahler cone under contraction.

\begin{lem}\label{lem cone kahler contracté}
    Let $\rho : X \to X'$ be a contraction. Assume that both $X$ and $X'$ are K\"ahler varieties with at worst rational singularities. Let 
    \[N := \mathrm{span}_\R\{T_C \in N_1(X) \mid \text{$C$ is a curve contracted by $\rho$}\} \subset N_1(X) \]
    and let $\tau := N^\perp \subset H^{1,1}_{\mathrm{BC}}(X)$. Then $\rho^*(\Kah(X')) = \mathrm{Int}(\Nef(X) \cap \tau)$.
\end{lem}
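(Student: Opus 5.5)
We prove the two inclusions separately, working throughout inside $H^{1,1}_{\mathrm{BC}}(X)$ and identifying it with $N_1(X)^*$ via $\Psi$; this is allowed since $X$ and $X'$ have rational singularities. By Proposition \ref{prop HP pullback}, $\rho^*$ is an injective linear map whose image is exactly $\tau = N^\perp$. Hence $\rho^*$ restricts to a linear isomorphism $H^{1,1}_{\mathrm{BC}}(X') \xrightarrow{\sim} \tau$. Since $\Kah(X')$ is open in $H^{1,1}_{\mathrm{BC}}(X')$, the set $\rho^*\Kah(X')$ is open in $\tau$. Here $\mathrm{Int}$ should be understood as the relative interior inside $\tau$, which is the only sensible reading, since $\tau$ is in general a proper subspace.

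\emph{Inclusion $\subseteq$.} Let $\omega \in \Kah(X')$. Then $\rho^*\omega$ is nef on $X$, because pulling back a Kähler metric gives a smooth semipositive form with local potentials. Alternatively, we can use duality: for $T \in \overline{\mathrm{NA}}(X)$ we have $\rho^*\omega \cdot T = \omega \cdot \rho_* T \geq 0$, since $\rho_*T \in \overline{\mathrm{NA}}(X')$ by Lemma \ref{lem pushforward}, and then Lemma \ref{lem classes Kahler} applies. Also $\rho^*\omega \in \tau$. So $\rho^*\Kah(X')$ is an open subset of $\tau$ contained in $\Nef(X)\cap\tau$, and therefore it lies in $\mathrm{Int}(\Nef(X)\cap\tau)$.

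\emph{Inclusion $\supseteq$.} Let $\alpha \in \mathrm{Int}(\Nef(X)\cap\tau)$ and write $\alpha = \rho^*\beta$ using Proposition \ref{prop HP pullback}. We show that $\beta$ is Kähler via Lemma \ref{lem classes Kahler}, i.e.\ that $\beta\cdot S>0$ for every nonzero $S\in\overline{\mathrm{NA}}(X')$. By Lemma \ref{lem pushforward} we may write $S=\rho_*T$ with $T\in\overline{\mathrm{NA}}(X)$. Then $\beta\cdot S = \alpha\cdot T \geq 0$ because $\alpha$ is nef. Suppose $\beta\cdot S=0$. Fix a Kähler class $\omega_0\in\Kah(X')$; then $\rho^*\omega_0\in\tau$. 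Since $\alpha$ is interior in $\Nef(X)\cap\tau$, for small $\varepsilon>0$ the class $\alpha-\varepsilon\rho^*\omega_0$ is still nef. Pairing it with $T$ gives
\[0 \leq (\alpha - \varepsilon\rho^*\omega_0)\cdot T = -\varepsilon\,\omega_0\cdot \rho_*T = -\varepsilon\, \omega_0\cdot S.\]
On the other hand, $\omega_0\cdot S>0$ by Lemma \ref{lem classes Kahler}, since $S\neq 0$. This is a contradiction, so $\beta\cdot S>0$ and $\beta$ is Kähler.

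The main obstacle is the interpretation of the interior. If $\mathrm{Int}$ were meant in the ambient space $H^{1,1}_{\mathrm{BC}}(X)$, the statement could only hold when $N=0$. So the key point is to fix the relative-interior reading and to check that the perturbation $\alpha-\varepsilon\rho^*\omega_0$ indeed stays inside $\tau$. This holds because $\rho^*\omega_0\in\tau$. The set $N$ itself plays no role beyond the equality $\operatorname{Im}\rho^*=\tau$, which is supplied by Proposition \ref{prop HP pullback}.
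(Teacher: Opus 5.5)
Your proof is correct and has the same structure as the paper's. Both inclusions rest on Proposition \ref{prop HP pullback}, Lemma \ref{lem classes Kahler} and Lemma \ref{lem pushforward}, with $\mathrm{Int}$ read as the interior relative to $\tau$, as the paper intends. The only variation is the last step of $\supseteq$: the paper invokes Lemma \ref{lem cone} to get $\tau \subset T^\perp$, hence $T \in \tau^\perp = N$ and $\rho_*T = 0$, whereas you perturb $\alpha$ in the single direction $-\rho^*\omega_0$ and contradict $\omega_0 \cdot \rho_*T > 0$, which spares you the identification $\tau^\perp = N$ and the vanishing of $\rho_*$ on contracted curves.
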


Here $T_C$ denotes the current of integration over the curve $C$. Recall that for a closed convex cone $\mathscr{C}$ inside a finite dimensional vector space $V$, we call a linear subspace $\tau \subset V$ an \textit{extremal face} if $\tau \cap \mathscr{C}$ has non-empty interior inside $\tau$ and if for two elements $x,y \in \mathscr{C}$ such that $x + y \in \tau$ one has $x,y \in \tau$. An extremal face of codimension 1 will be called a \textit{wall}.

\begin{lem}\label{lem cone}
    Let $V$ and $\mathscr{C} \subset V$ be as above and let $\tau$ be an extremal face of $\mathscr{C}$. Let $\varphi : V \to \R$ be a linear function such that for all $x \in \mathscr{C}$ one has $\varphi(x) \geqslant 0$ and let $H:= \ker(\varphi)$ be the associated hyperplane. Assume that there exists $x \in \mathrm{Int}(\tau \cap \mathscr{C})$ such that $x \in H$. Then $\tau \subset H$. 
\end{lem}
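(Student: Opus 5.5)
Since $\tau \cap \mathscr{C}$ has nonempty interior in $\tau$, it spans $\tau$. It therefore suffices to show that $\varphi$ vanishes on a spanning set of $\tau$, and the natural spanning set is a small ball around $x$ inside $\tau$. By hypothesis $x \in \mathrm{Int}(\tau\cap\mathscr{C})$, where the interior is taken relative to $\tau$. So there is $\varepsilon>0$ such that $x+v \in \tau \cap \mathscr{C}$ for every $v \in \tau$ with $\|v\| < \varepsilon$, for any fixed norm on $V$.

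Now fix $v \in \tau$ with $\|v\|<\varepsilon$. Both $x+v$ and $x-v$ lie in $\mathscr{C}$, so $\varphi(x+v) \geqslant 0$ and $\varphi(x-v)\geqslant 0$. Since $\varphi$ is linear and $\varphi(x)=0$ (because $x\in H$), these read $\varphi(v)\geqslant 0$ and $-\varphi(v)\geqslant 0$, hence $\varphi(v)=0$.

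Every $w\in\tau$ is a scalar multiple of some such $v$ (rescale $w$ to have norm less than $\varepsilon$), so by linearity $\varphi(w)=0$ for all $w\in\tau$. This gives $\tau\subset H$.

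None of the steps is difficult. The only point that needs attention is that "interior" here means the interior relative to $\tau$, and this is exactly what makes $\pm v$ admissible for every direction $v$ in $\tau$. The extremality of $\tau$ is not needed; the argument only uses that $\tau \cap \mathscr{C}$ has nonempty interior in $\tau$ and that $x$ lies in that interior.
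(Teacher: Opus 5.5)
Your proof is correct and is essentially the paper's argument. Both use the two-sided perturbation $x \pm v$ together with $\varphi(x)=0$ to force $\varphi(v)=0$. The only difference is that the paper perturbs along elements $y \in \tau \cap \mathscr{C}$ and then uses that $\tau\cap\mathscr{C}$ spans $\tau$, whereas you perturb along arbitrary small $v \in \tau$ and rescale, so your opening remark about spanning is not actually needed.
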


\begin{proof}
    Let $y \in \tau \cap \mathscr{C}$. Then since $x \in \mathrm{Int}(\tau \cap \mathscr{C})$ there exists $\varepsilon > 0$ such that $x \pm \varepsilon y \in \tau \cap \mathscr{C}$. By hypothesis $\varphi(x + \varepsilon y) = \varepsilon \varphi(y) \geqslant 0$ and $\varphi(x - \varepsilon y) = -\varepsilon \varphi(y) \geqslant 0$ and thus $\varphi(y) = 0$. In particular $H$ contains $\tau \cap \mathscr{C}$ which has non-empty interior inside $\tau$, and thus contains $\tau$. 
\end{proof}
    
\begin{proof}[Proof of Lemma \ref{lem cone kahler contracté}]
    Let us start by noticing that by Proposition \ref{prop HP pullback}, $\tau$ is the image of $H^{1,1}_{\mathrm{BC}}(X')$ inside $H^{1,1}_{\mathrm{BC}}(X)$. Let $\omega \in H^{1,1}_{\mathrm{BC}}(X')$ be a K\"ahler class. By Lemma \ref{lem classes Kahler} combined by the projection formula, $\rho^*\omega$ is nef. Since $\Kah(X')$ is open inside $H^{1,1}_{\mathrm{BC}}(X')$, we have $\rho^*(\Kah(X')) \subset \mathrm{Int}(\Nef(X) \cap \tau)$. Notice that this implies in particular that $\tau$ is a face of $\Nef(X)$. Let $\alpha \in \mathrm{Int}(\Nef(X) \cap \tau)$. Then again by Proposition \ref{prop HP pullback} there exists $\omega \in H^{1,1}_{\mathrm{BC}}(X')$ such that $\alpha = \rho^*\omega$. By Lemma \ref{lem classes Kahler} it suffices to check that $T' \cdot \omega > 0$ for all $T' \in \overline{\mathrm{NA}}(X') \setminus \{0\}$. Let $T' \in \overline{\mathrm{NA}}(X')$. By Lemma \ref{lem pushforward}, there exists $T \in \overline{\mathrm{NA}}(X)$ such that $T' = \rho_*(T)$. We have $\alpha \in T^\perp$ and thus by Lemma \ref{lem cone} $\tau \subset T^\perp$. Taking the orthogonal we get that $T \in \tau^\perp$ and thus $T$ is represented by a linear combination with real coefficients of currents of integration over curves contracted by $\rho$. In particular $\rho_*T = 0$ and thus $\omega$ is K\"ahler. 
\end{proof}

\section{Proofs}

\subsection{Proof of Proposition \ref{prop reduction CY to HK}}\label{section proof reduction}

Recall that for all product $X \times Y$ of K\"ahler manifolds such that $H^1(X, \mathcal{O}_X) = 0$, we have by the K\"unneth formula for Dolbeault cohomology (\cite[p.105]{GriffithsHarrisPrinciples}) that $H^{1,1}(X \times Y, \R) = \mathrm{pr}^*_X H^{1,1}(X, \R) \oplus \mathrm{pr}^*_Y H^{1,1}(Y, \R)$. In this case, if $\alpha \in H^{1,1}(X \times Y, \R)$, we denote by $\alpha_X \in H^{1,1}(X, \R)$ and $\alpha_Y \in H^{1,1}(Y, \R)$ the classes such that $\alpha = \mathrm{pr}^*_X \alpha_X + \mathrm{pr}^*_Y \alpha_Y$.

\begin{lem}\label{lem decompo classes}
    Let $X$ and $Y$ be K\"ahler manifolds. Assume that $h^1(X, \mathcal{O}_X) = 0$. Then a class $\alpha \in H^{1,1}(X \times Y, \R)$ is big and nef if and only if $\alpha_X$ and $\alpha_Y$ are big and nef classes on $X$ and $Y$ respectively. 
\end{lem}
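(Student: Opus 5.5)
We argue directly with Kähler classes and the decomposition $\alpha = \mathrm{pr}_X^*\alpha_X + \mathrm{pr}_Y^*\alpha_Y$, which is unique because $h^1(X,\mathcal{O}_X)=0$ kills the mixed Künneth terms $H^{1,0}(X)\otimes H^{0,1}(Y)$ and $H^{0,1}(X)\otimes H^{1,0}(Y)$. Fix a point $x_0 \in X$ and $y_0 \in Y$, with inclusions $i_{y_0} : X \to X \times Y$, $x \mapsto (x,y_0)$, and $j_{x_0} : Y \to X\times Y$, $y\mapsto (x_0,y)$. Since $\mathrm{pr}_Y \circ i_{y_0}$ is constant, its pullback kills classes of positive degree, so $i_{y_0}^*\alpha = \alpha_X$; similarly $j_{x_0}^*\alpha = \alpha_Y$.

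\emph{Nefness.} If $\alpha$ is nef, then $\alpha_X = i_{y_0}^*\alpha$ is nef, because pullbacks of Kähler classes under holomorphic maps are nef (restrictions of Kähler forms stay Kähler on a submanifold) and nefness is a closed condition; likewise for $\alpha_Y$. Conversely, if $\alpha_X,\alpha_Y$ are nef, write them as limits of Kähler classes $\omega_X^k, \omega_Y^k$. Then $\mathrm{pr}_X^*\omega_X^k + \mathrm{pr}_Y^*\omega_Y^k$ is a Kähler class on $X\times Y$ converging to $\alpha$, so $\alpha$ is nef.

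\emph{Bigness.} Set $n=\dim X$ and $m=\dim Y$. Since $\alpha_X^{p} = 0$ for $p>n$ and $\alpha_Y^q=0$ for $q>m$, the binomial expansion gives
\[ \alpha^{n+m} = \binom{n+m}{n}\, \mathrm{pr}_X^*\alpha_X^n \cdot \mathrm{pr}_Y^*\alpha_Y^m = \binom{n+m}{n} \Big(\int_X \alpha_X^n\Big)\Big(\int_Y \alpha_Y^m\Big). \]
Each factor is nonnegative because the classes are nef: $\int_X\alpha_X^n$ is a limit of volumes of Kähler classes. Hence, once nefness is known on both sides, $\alpha^{n+m}>0$ if and only if $\alpha_X^n>0$ and $\alpha_Y^m>0$. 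This is exactly the bigness criterion fixed in the preliminaries.

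The only delicate point is the identification $i_{y_0}^*\alpha = \alpha_X$, which relies on the Künneth splitting of $H^{1,1}(X\times Y,\R)$ given by $h^1(X,\mathcal{O}_X)=0$. Once that is in place, everything else is formal.
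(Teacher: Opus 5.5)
Your proof is correct and is essentially the paper's argument: nefness comes from restricting to the slices $X\times\{y_0\}$ and $\{x_0\}\times Y$ together with approximation by K\"ahler classes $\mathrm{pr}_X^*\omega_X+\mathrm{pr}_Y^*\omega_Y$, and bigness comes from the product formula for $\alpha^{n+m}$. You are slightly more careful than the paper in two places: you keep the harmless factor $\binom{n+m}{n}$, which the paper drops, and you note that both factors are nonnegative by nefness, which the bigness equivalence needs. One small slip: the hypothesis $h^1(X,\mathcal{O}_X)=0$ gives the \emph{existence} of the splitting $\alpha=\mathrm{pr}_X^*\alpha_X+\mathrm{pr}_Y^*\alpha_Y$, whereas uniqueness holds automatically, since restricting to a slice already shows $\mathrm{pr}_X^*H^2(X)\cap\mathrm{pr}_Y^*H^2(Y)=0$.
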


\begin{proof}
    If $\omega_X$ and $\omega_Y$ are K\"ahler classes on $X$ and $Y$ respectively then $\mathrm{pr}^*_X \omega_X + \mathrm{pr}^*_Y \omega_Y$ is K\"ahler on $X \times Y$. Conversely, if $\omega = \mathrm{pr}^*_X \omega_X + \mathrm{pr}^*_Y \omega_Y$ is K\"ahler on $X \times Y$, then by restricting to submanifolds of the form $X \times \{p\}$ and $\{q\} \times Y$ one obtains that $\omega_X$ and $\omega_Y$ are K\"ahler. Since nef classes are limits of K\"ahler classes, this implies that $\Nef(X \times Y) = \mathrm{pr}^*_X \Nef(X) \oplus \mathrm{pr}^*_Y \Nef(Y)$. For bigness, let us observe that for any class $\alpha = \mathrm{pr}^*_X \alpha_X + \mathrm{pr}^*_Y \alpha_Y$ one has 
    \[ \int_{X \times Y} \alpha^{\dim(X) + \dim(Y)} = \left(\int_X \alpha_X^{\dim(X)} \right) \cdot \left(\int_Y \alpha_Y^{\dim(Y)} \right) \]
    and in particular, if $\alpha$ is nef, then $\alpha$ is big if and only if $\alpha_X$ and $\alpha_Y$ are big. 
\end{proof}

We will need to make contractions descend under finite étale maps. For this, we first treat the Galois case. More generally, we establish the following. We use the term \textit{cover} to designate a finite surjective morphism. A cover $\pi : \widehat{X} \to X$ is said to be \textit{Galois} of group $G$ if $G$ is a finite subgroup of $\Aut(\widehat{X})$ such that $X = \widehat{X}/G$ and $\pi$ is the projection map. 

\begin{lem}\label{lem Galois}
    Let $\pi : \widehat{X} \to X$ be a Galois cover of group $G$ between compact K\"ahler manifolds. Let $\alpha \in H^{1,1}(X, \R)$ and $\widehat{\alpha} := \pi^*\alpha$. Assume that there exists a bimeromorphic contraction $\widehat{\rho} : \widehat{X} \to \widehat{Y}$, with $\widehat{Y}$ having at most rational singularities, together with a K\"ahler class $\widehat{\omega} \in H^{1,1}_{\mathrm{BC}}(\widehat{Y})$ such that $\widehat{\rho}^*\widehat{\omega} = \widehat{\alpha}$. Then there exists a bimeromorphic contraction $\rho : X \to Y$ together with a K\"ahler class $\omega \in H^{1,1}_{\mathrm{BC}}(Y)$ such that $\rho^*\omega = \alpha$, i.e. $\alpha$ is semi-ample.
\end{lem}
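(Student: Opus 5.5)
The plan is to make $G$ act on $\widehat{Y}$, take $Y := \widehat{Y}/G$, and descend the class $\widehat{\omega}$.

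\emph{Step 1: $G$ acts on $\widehat{Y}$.} Since $\widehat{\alpha} = \pi^*\alpha$ is $G$-invariant, for every $g \in G$ the class $\widehat{\rho}^*\widehat{\omega}$ satisfies $g^*\widehat{\alpha} = \widehat{\alpha}$. A curve $C \subset \widehat{X}$ is contracted by $\widehat{\rho}$ if and only if $\widehat{\alpha}\cdot C = 0$. Indeed, $\widehat{\omega}$ is K\"ahler, so Lemma \ref{lem classes Kahler} and the projection formula give $\widehat{\alpha} \cdot C = \widehat{\omega}\cdot \widehat{\rho}_*C$, which vanishes exactly when $\widehat{\rho}_*C = 0$. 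Hence $g$ maps contracted curves to contracted curves. Because the fibres of $\widehat{\rho}$ are connected, and every pair of points in a fibre is joined by a chain of contracted curves (fibres are connected compact analytic sets contracted to a point), $\widehat{\rho}\circ g$ is constant on the fibres of $\widehat{\rho}$. Since $\widehat{\rho}_*\mathcal{O}_{\widehat{X}} = \mathcal{O}_{\widehat{Y}}$ by normality and connected fibres, $\widehat{\rho}\circ g$ factors through a holomorphic map $g_{\widehat{Y}} : \widehat{Y} \to \widehat{Y}$. This yields a $G$-action on $\widehat{Y}$ making $\widehat{\rho}$ equivariant.

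\emph{Step 2: the quotient.} Set $Y := \widehat{Y}/G$. It is a normal compact complex space with quotient map $q : \widehat{Y} \to Y$, and $\widehat{\rho}$ induces $\rho : X = \widehat{X}/G \to Y$. Then $\rho$ is proper and surjective. It is bimeromorphic because $\widehat{\rho}$ is an isomorphism over a dense open $G$-invariant set, and the $G$-actions agree there. Its fibres are connected, since $Y$ is normal and $\rho$ is bimeromorphic (Zariski's main theorem). Quotients of spaces with rational singularities by finite groups have rational singularities, so $Y$ has rational singularities and the cohomological tools of Section 2 apply.

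\emph{Step 3: descending the K\"ahler class.} The averaged class $\omega' := \frac{1}{|G|}\sum_{g} g_{\widehat{Y}}^*\widehat{\omega}$ is K\"ahler, since each $g_{\widehat Y}^*\widehat\omega$ is K\"ahler and $\Kah$ is a convex cone. It also pulls back to $\widehat{\alpha}$, so by injectivity of $\widehat{\rho}^*$ (Proposition \ref{prop HP pullback}) it equals $\widehat{\omega}$; thus $\widehat{\omega}$ is $G$-invariant. To get a class on $Y$, I would invoke Proposition \ref{prop HP pullback} for the contraction $\rho$. Since $\pi^*\alpha\cdot\widehat{C} = \deg(\pi|_{\widehat{C}})\,\alpha\cdot \pi(\widehat{C})$ and every curve in $X$ lifts to a curve in $\widehat{X}$, the class $\alpha$ vanishes on curves contracted by $\rho$. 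Hence $\alpha = \rho^*\omega$ for a unique $\omega \in H^{1,1}_{\mathrm{BC}}(Y)$.

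\emph{Step 4: $\omega$ is K\"ahler.} By Lemma \ref{lem classes Kahler} it suffices to check $\omega\cdot T' > 0$ for every nonzero $T' \in \overline{\mathrm{NA}}(Y)$. Write $T' = \rho_*T$ with $T \in \overline{\mathrm{NA}}(X)$ (Lemma \ref{lem pushforward}), and lift $T$ to $\widehat{T} := \pi^*T$, which is a positive current on $\widehat{X}$ with $\pi_*\widehat{T} = |G|\,T$. Then
\[
|G|\,\omega\cdot T' = \alpha\cdot \pi_*\widehat{T} = \widehat{\alpha}\cdot\widehat{T} = \widehat{\omega}\cdot \widehat{\rho}_*\widehat{T}.
\]
This is positive unless $\widehat{\rho}_*\widehat{T} = 0$, in which case $q_*\widehat{\rho}_*\widehat T = |G|\,\rho_*T = 0$ by commutativity, a contradiction. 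Alternatively, one can argue with local potentials: average a K\"ahler metric representing $\widehat\omega$ over $G$ and push the $G$-invariant strictly psh potentials down to $Y$, where they remain strictly psh. I expect this last step, showing that the descended class is genuinely K\"ahler on the singular quotient, to be the main obstacle. The potential-averaging route is the one I would try first, checking that $G$-invariant strictly psh functions descend to strictly psh functions on $\widehat{Y}/G$ (e.g.\ via a local $G$-equivariant embedding).
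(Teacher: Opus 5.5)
Your Steps 1--3 are essentially sound. In Step 1, the claim that any two points of a fibre are joined by a chain of contracted curves needs a reason, since a connected compact analytic set need not contain curves. It does hold here, because fibres of a proper bimeromorphic map are Moishezon; alternatively, argue with subvarieties $Z$ such that $\widehat\alpha|_Z\equiv 0$, as the paper does.

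The genuine gap is Step 4. Lemma \ref{lem classes Kahler} and Lemma \ref{lem pushforward} are statements about a compact \emph{K\"ahler} space with rational singularities. You apply them to $Y=\widehat Y/G$, whose K\"ahlerness is not known at that point, and establishing it is the real content of the lemma. On a non-K\"ahler space $\Kah(Y)=\emptyset$, so positivity of $\omega$ on $\overline{\mathrm{NA}}(Y)\setminus\{0\}$ yields nothing. Being bimeromorphic to the K\"ahler manifold $X$ does not make $Y$ K\"ahler either. As written, the duality argument is therefore circular.

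The alternative you mention, averaging potentials, is the paper's route, but the part you leave open is exactly where the work lies. After averaging a K\"ahler metric on $\widehat Y$ over $G$ on invariant opens $\pi_Y^{-1}(V_i)$, the descended potentials on $Y$ are a priori only continuous. Their strict plurisubharmonicity across the branch locus of $\pi_Y$ is not automatic. Even granting it, the paper's definition of a K\"ahler class requires \emph{smooth} strictly psh potentials.

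The paper handles both issues with Varouchas' results:
\begin{itemize}
\item $\pi_Y$ is geometrically flat, so it induces $H:Y\to\mathrm{Sym}^d(\widehat Y)$. The functions $\widetilde{\varphi}\circ H$ are continuous and strictly psh by \cite[Lemma 3.1.2]{VarouchasMathAnn}, which gives a continuous K\"ahler metric on $Y$.
\item \cite[Theorem 1]{VarouchasMathAnn} then produces a smooth K\"ahler metric in the same Bott--Chern class $\omega$.
\item Since $\widehat\omega$ is $G$-invariant (you proved this correctly), $\pi_Y^*\omega=\widehat\omega$. Hence $\pi^*\rho^*\omega=\widehat\rho^*\widehat\omega=\pi^*\alpha$, and injectivity of $\pi^*$ gives $\rho^*\omega=\alpha$.
\end{itemize}
This makes your Step 3, and the rational singularities of $Y$, unnecessary.

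Conversely, you could take as a black box that finite quotients of compact K\"ahler spaces are K\"ahler, which is again Varouchas' theorem. With that input, your Steps 3--4 go through as written and give a valid duality-based alternative. Without it, Step 4 does not stand.
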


\begin{proof}
    First, we show that the action of $G$ on $\widehat{X}$ descends to an action on $\widehat{Y}$. Let $g \in G$. The fibers of $\widehat{\rho}$ are exactly the subvarieties $Z$ of $\widehat{X}$ for which $\alpha|_{Z} \equiv 0$. By construction, one has $g^*\widehat{\alpha} = \widehat{\alpha}$, and thus $g$ must exchange fibers of $\widehat{\rho}$. Since $\widehat{\rho}$ has connected fibers, $\widehat{\rho}$ must be its own Stein factorization. By the universal property of the Stein factorization (\cite[p. 214]{GrauertRemmertCohAnSheaves}), we get that $g$ descends to a map $g' : \widehat{Y} \to \widehat{Y}$. This shows that there is an action of $G$ on $\widehat{Y}$ for which $\widehat{\rho}$ is $G$-equivariant. Let $Y := \widehat{Y}/G$ which is a normal complex space by \cite{CartanQuotientEspace}. By construction, we have the following diagram
    \[ \xymatrix{
    \widehat{X} \ar[d]_{\pi} \ar[rr]^{\widehat{\rho}} && \widehat{Y} \ar[d]^{\pi_Y} \\
    X \ar[rr]^\rho && Y.}
    \]
    Clearly, $\rho$ is surjective. Moreover, any fiber of $\rho$ is isomorphic to a quotient of a fiber of $\widehat{\rho}$ and is in particular connected. 

    It remains to show that $\widehat{\omega}$ descends to a K\"ahler class $\omega \in H^{1,1}_{\mathrm{BC}}(Y)$. For this we follow the arguments given in \cite[Section II.3]{VarouchasMathAnn} and \cite[Proposition 2.1]{VarouchasInvent}. The morphism $\pi_Y$ is geometrically flat in the sense \cite[p.20]{VarouchasMathAnn}, i.e. it induces a morphism $H : Y \to \mathrm{Sym}^d(\widehat{Y})$, where $d$ is the degree of $\pi_Y$. For a function $\varphi$ defined on an open $U$ of $Y$ we consider the function $\widetilde{\varphi}$ defined on $\mathrm{Sym}^d(U)$ by $\widetilde{\varphi}(\sum_{i = 1}^{d} x_i) := \frac{1}{d} \sum_{i = 1}^{d} \varphi(x_i)$. Let $k \in H^0(\widehat{Y}, \mathscr{C}^\infty_{\widehat{Y}}/\mathrm{PH}_{\widehat{Y}})$ be a K\"ahler metric in the class $\widehat{\omega}$. Let $(U_i, \varphi_i)_{i \in I}$ be a representative of $k$ where $(U_i)_{i \in I}$ is an open covering of $\widehat{Y}$ and $(\varphi_i)_{i \in I}$ is a family of smooth strictly plurisubharmonic functions defined on each $U_i$. Since $\pi_Y$ is finite and surjective, up to restricting the $U_i$'s and forming finite disjoint union, we can assume that there is an open covering $(V_i)_{i \in I}$ of $Y$ such that for all $i \in I$, $\pi_Y^{-1}(V_i) = U_i$. We can then consider for all $i \in I$ the function $\psi_i := \widetilde{\varphi} \circ H$ which are continuous strictly plurisubharmonic functions (\cite[Lemma 3.1.2]{VarouchasMathAnn}) and define a section $k' \in H^0(Y, \mathscr{C}^0_{Y}/\mathrm{PH}_{Y})$ (\cite[p.27]{VarouchasMathAnn}). Notice that by construction we have $\pi_Y^*k' = \frac{1}{|G|} \sum_{g \in G} g^*k$. As for $\mathscr{C}^\infty$ functions, we have a map $H^0(Y, \mathscr{C}^0_{Y}/\mathrm{PH}_{Y}) \to H^{1,1}_{\mathrm{BC}}(Y)$. We denote by $\omega \in H^{1,1}_{\mathrm{BC}}(Y)$ the image $k'$ under this map. By Proposition \ref{prop HP pullback}, $\widehat{\rho}^*$ embeds $H^{1,1}_{\mathrm{BC}}(\widehat{Y})$ into $H^{1,1}_{\mathrm{BC}}(\widehat{X})$. In particular, we have $g^*\widehat{\omega} = \widehat{\omega}$ for all $g \in G$ and thus $\pi_Y^*\omega = \widehat{\omega}$. By \cite[Theorem 1]{VarouchasMathAnn} we can find a K\"ahler metric $k'' \in H^0(Y, \mathscr{C}^{\infty}_{Y}/\mathrm{PH}_{Y})$ whose class in $H^{1,1}_{\mathrm{BC}}(Y)$ is $\omega$. By construction, we have $(\rho \circ \pi)^*\omega = \widehat{\alpha}$ and since $\pi^* : H^{1,1}(X, \R) \to H^{1,1}(\widehat{X}, \R)$ is injective, we must have $\rho^*\omega = \alpha$.
\end{proof}

\begin{proof}[Proof of Proposition \ref{prop reduction CY to HK}]
    Recall that for any étale cover such as $\pi$ there exists a Galois closure, that is a finite étale cover $\pi' : \widehat{X}' \to \widehat{X}$ such that $\pi \circ \pi'$ is Galois. Since $\widehat{X}$ is a product of a torus with a simply connected manifold, the covering $\pi'$ must come from a covering of the torus factor. In particular, $\widehat{X}'$ is still a product of a torus with hyperk\"ahler and strict Calabi-Yau manifolds. We can thus assume that $\pi$ is Galois from the start. We can also assume that $\alpha$ is not K\"ahler and thus $\widehat{\alpha} := \pi^*\alpha$ is not K\"ahler either. Clearly, $\widehat{\alpha}$ is big and nef. Since every big and nef class on a torus is K\"ahler, there is at least a \HK or strict Calabi-Yau factor in $\widehat{X}$. We can thus apply Lemma \ref{lem decompo classes} to obtain a decomposition
    $\widehat{\alpha} = \mathrm{pr}_T^*\alpha_T + \sum_{i = 1}^{n} \mathrm{pr}_{X_i}^*\alpha_i + \sum_{j = 1}^{m} \mathrm{pr}_{Y_j}^*\beta_j$ where $\alpha_i \in H^{1,1}(X_i, \R)$ and $\beta_j \in H^{1,1}(Y_j, \R)$ are big and nef classes and $\alpha_T \in H^{1,1}(T, \R)$ is K\"ahler. By assumption, for all $1 \leqslant i \leqslant n$ we have a contraction $\rho_i : X_i \to X'_i$ together with K\"ahler classes $\alpha'_i \in H^{1,1}(X'_i, \R)$ such that $\rho_j^*\alpha'_i = \alpha_i$. For all $1 \leqslant j \leqslant m$, one has $H^{2,0}(Y_j) = 0$ and thus $Y_j$ is projective and $\beta_j$ is the class of a $\R$-divisor on $Y_j$. By \cite[3.9.1]{BCHM} we have a contraction $\nu_j : Y_j \to Y_j'$ together with K\"ahler classes $\beta'_j \in H^{1,1}(Y'_j, \R)$ such that $\nu_j^*\beta'_i = \beta_i$. By setting $\widehat{X}' := T \times \prod_{i = 1}^{n} X'_i \times \prod_{j = 1}^{m} Y'_j$ and $\widehat{\omega} := \mathrm{pr}_T^*\alpha_T + \sum_{i = 1}^{n} \mathrm{pr}_{X'_i}^*\alpha'_i + \sum_{j = 1}^{m} \mathrm{pr}_{Y'_j}^*\beta'_j$ we get a contraction $\widehat{\rho} := \Id_T \times \prod_{i = 1}^{n} \rho_i \times \prod_{j = 1}^{m} \nu_j : \widehat{X} \to \widehat{X}'$ such that $\widehat{\rho}^*\widehat{\omega} = \widehat{\alpha}$. Since $K_{\widehat{X}}$ is trivial and $\widehat{Y}$ is normal, $K_{\widehat{Y}}$ is trivial. In particular $\widehat{\rho}$ is crepant and $\widehat{Y}$ has canonical singularities. Since canonical singularities are rational (see \cite[Theorem 3.12]{FujinoMMPForProjMor} for a statement in the complex analytic setting) we can apply Lemma \ref{lem Galois} to conclude.
\end{proof}

\subsection{Proof of Proposition \ref{prop cas HK}}

Let $X$ be a \HK manifold of dimension $2n$. Recall that $H^2(X, \Z)$ comes endowed with a quadratic form $q_X$ called the \textit{Beauville-Bogomolov form}. This form satisfies  the \textit{Fujiki relation} $q_X(\beta, \beta)^n = c_X \int_X \beta^{2n}$ for a constant $c_X >0$. We will denote by $\mathrm{Pos}(X) \subset H^{1,1}(X, \R)$ the open cone formed by elements of positive $q_X$ square. Notice that by the Fujiki relation any big and nef class on $X$ lies inside $\mathrm{Pos}(X)$. The following result gives a clear description of the K\"ahler cone of $X$. 

\begin{thm}[{\cite[Theorem 1.2]{BoucksomConeHK}, \cite[Proposition 3.2]{HuybrechtsKahlerCone}}]\label{thm cone Kahler}
    One has 
    \[ \Kah(X) = \{ \beta \in \mathrm{Pos}(X) \mid \text{$\beta \cdot [C] > 0$ for all rational curves $C \subset X$}\}.\]
\end{thm}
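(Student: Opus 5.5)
The inclusion ``$\subset$'' is the easy half. If $\beta$ is K\"ahler, then $\beta \cdot [C] = \int_C \beta > 0$ for every curve $C$. Moreover $q_X(\beta,\beta) > 0$, either by the Fujiki relation (since $\int_X \beta^{2n} > 0$) or directly from Beauville's formula for $q_X$ evaluated on a K\"ahler form. Here $\mathrm{Pos}(X)$ should be understood as the connected component of the positive cone containing $\Kah(X)$, and $\Kah(X)$ is convex, so it lies in that component. For the reverse inclusion I would fix $\beta$ in the right-hand side and a K\"ahler class $\omega$, and consider the segment $\beta_t := (1-t)\omega + t\beta$ for $t \in [0,1]$. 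Since $\mathrm{Pos}(X)$ (the chosen component) is convex and the positivity on rational curves is preserved under convex combinations, every $\beta_t$ lies in the right-hand side. If $\beta$ were not K\"ahler, let $t_0 \in (0,1]$ be the first time $\beta_t$ leaves $\Kah(X)$. Then $\alpha := \beta_{t_0}$ is nef, not K\"ahler, satisfies $q_X(\alpha) > 0$, hence is big by the Fujiki relation, and still satisfies $\alpha \cdot [C] > 0$ for every rational curve $C$.

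Next I would locate the obstruction to K\"ahlerness geometrically. By Demailly--P\u{a}un, the K\"ahler cone is a connected component of the cone of classes $\gamma$ with $\int_Z \gamma^{\dim Z} > 0$ for all irreducible analytic subvarieties $Z$. Equivalently, by Collins--Tosatti, a big nef class fails to be K\"ahler exactly along its nonempty null locus $E_{nK}(\alpha) = \bigcup \{ Z : \int_Z \alpha^{\dim Z} = 0 \}$. So there is a positive-dimensional irreducible $Z \subset X$ with $\alpha|_Z$ not big. The aim is to produce a rational curve $C \subset Z$ with $\alpha \cdot [C] = 0$, which contradicts the previous paragraph.

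The main obstacle is this last step: showing that the null locus of a big nef class on a \HK manifold is covered by $\alpha$-trivial rational curves. In the projective case with $\alpha$ rational, I would use the basepoint-free theorem. The contraction given by $\alpha$ is crepant, so its positive-dimensional fibres are covered by rational curves (Kawamata), and these curves are $\alpha$-trivial. For the general transcendental case I would follow Huybrechts' deformation strategy. Pick a family of deformations $(X_s)$ over which $\alpha$ stays of type $(1,1)$ and along which the classes $\alpha_s$ are K\"ahler for very general $s$; for instance, on a very general deformation with Picard number one there are no curves at all, so $\Kah = \mathrm{Pos}$. Then degenerate back to $X$. The limit of the graphs of the identifications gives a cycle $\Gamma = \Delta + \sum_i Y_i$ on $X \times X'$, whose extra components $Y_i$ are contained in products of subvarieties contracted on one side. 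The action of $[\Gamma]$ on $H^2$ then shows that $\alpha$ differs from a K\"ahler class by contributions of the $Y_i$. Using the symplectic form $\sigma$ (the $Y_i$ are Lagrangian-type correspondences, so the fibres of the projections are isotropic and uniruled), one extracts rational curves on which $\alpha$ has nonpositive degree.

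If this cycle-theoretic step proves too delicate, I would instead use Boucksom's divisorial Zariski decomposition to handle the divisorial part of the null locus. Its components are uniruled prime exceptional divisors, covered by rational curves on which $q_X$-duality forces $\alpha$-degree $\le 0$. The higher-codimension part would then be treated by the deformation argument above.
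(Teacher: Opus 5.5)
The paper only quotes this theorem, so the comparison has to be with the cited works. Your preliminary steps are sound:
\begin{itemize}
\item the easy inclusion;
\item the necessary restriction of $\mathrm{Pos}(X)$ to the component containing $\Kah(X)$ (with the two-sheeted cone, $-\omega$ satisfies the right-hand side on any $X$ without rational curves);
\item the segment argument reducing to a nef, non-K\"ahler $\alpha$ with $q_X(\alpha)>0$ and $\alpha\cdot[C]>0$ for all rational $C$;
\item the existence of a positive-dimensional $Z$ with $\int_Z\alpha^{\dim Z}=0$.
\end{itemize}

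The gap is the remaining step, producing a rational curve with $\alpha\cdot[C]\leqslant 0$. This is the whole content of the theorem, and your sketch does not supply it.

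(i) If $\alpha$ is proportional to a rational class, a very general deformation keeping $\alpha$ of type $(1,1)$ has Picard number one with a generator of positive square. It is therefore projective and has many curves; ``no curves'' needs Picard number zero. In either case, knowing that $\alpha_s$ is K\"ahler on very general nearby fibres says nothing about the subvarieties of $X$ that do not deform, and those are exactly the ones that can obstruct K\"ahlerness on $X$.

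(ii) A limit-of-graphs cycle requires a second hyperk\"ahler manifold $X'$, non-separated from $X$, on which $\alpha$ is K\"ahler. Its main component is the graph of a bimeromorphic map $X\dashrightarrow X'$, not $\Delta$. Huybrechts builds $X'$ from the twistor line through the period of $X$ in the direction $\alpha$. This needs the general point of that line to have Picard number zero, i.e.\ $\alpha$ very general. Your $\alpha$ sits on the boundary of $\Nef(X)$ and nothing makes it very general; if it were, Huybrechts' result would already conclude. For boundary classes such an $X'$ does not exist in general. For instance, take a K3 surface and $\alpha$ on the wall $[C]^\perp$ of a $(-2)$-curve: $\alpha$ is orthogonal to a $(-2)$-class, so no K3 surface with the same period carries $\alpha$ as a K\"ahler class under any Hodge isometry.

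(iii) You assert that the extra components $Y_i$ have isotropic, uniruled fibres carrying curves of non-positive $\alpha$-degree. No argument is given, and this is precisely the delicate point.

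The fallback does not close the gap. A nef class has zero negative part in its divisorial Zariski decomposition, so that decomposition does not see the null locus at all. A prime divisor $D$ in the null locus does satisfy $q_X(\alpha,D)=0$, hence $q_X(D)<0$. Such a $D$ can be handled by later and deeper results on the uniruledness of prime exceptional divisors and the class of their rulings. But components of higher codimension, such as a Lagrangian $\mathbb{P}^n$ contracted before a Mukai flop, are the genuinely hard case, and they are sent back to (ii)--(iii).

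Your target, that the null locus is covered by $\alpha$-trivial rational curves, is also stronger than what is needed. In the projective case you get it from the contraction defined by $\alpha$. For transcendental $\alpha$, that contraction is exactly what Conjecture~\ref{conj Tosatti} asks for, and the paper constructs it (Proposition~\ref{prop cas HK}) using the present theorem. Within this paper that route would therefore be circular.
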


Following \cite[Definition 4.12]{AmerikVerbitsky}, we will say that a class $\delta \in H_2(X, \Q)$ is \textit{minimal} if $\delta^\perp \subset H^{1,1}(X, \R)$ contains a wall of $\Nef(X)$. By Theorem \ref{thm cone Kahler} and Lemma \ref{lem cone}, a minimal class $H_2(X, \Q)$ is represented, up to multiplication by a scalar, by a rational curve. Minimal classes in $H_2(X, \Q)$ are particular instances of a more general notion introduced by Amerik and Verbitsky in \cite{AmerikVerbitsky} called \textit{MBM classes}. As part of the proof the cone conjecture for \HK manifold, Amerik and Verbitsky showed in \cite{AmerikVerbitskyannallesENS} that integral MBM classes have bounded square whenever $b_2(X) \geqslant 6$. As observed by Bakker and Lehn in \cite{BakkerLehnAGlobalTorelli}, this has the following consequence.

\begin{prop}[{\cite[Theorem 5.3]{AmerikVerbitskyannallesENS}, \cite[Remark 3.16]{BakkerLehnAGlobalTorelli}}]\label{prop faces}
    Assume that $b_2(X) \geqslant 6$. Then a point $\alpha \in \partial\Nef(X) \cap \mathrm{Pos}(X)$ lies on finitely many walls formed by extremal classes. 
\end{prop}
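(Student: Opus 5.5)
The plan is to deduce the statement from the boundedness of squares of integral MBM classes (\cite[Theorem 5.3]{AmerikVerbitskyannallesENS}), which holds under $b_2(X) \geqslant 6$. This bound turns into local finiteness of walls inside the positive cone.

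First I identify the walls. By Theorem \ref{thm cone Kahler}, inside $\mathrm{Pos}(X)$ the cone $\Nef(X)$ is cut out by the half-spaces $\{\beta \cdot [C] \geqslant 0\}$, where $C$ runs over rational curves. Each wall of $\Nef(X)$ meeting $\mathrm{Pos}(X)$ is therefore of the form $\delta^\perp$ for a minimal class $\delta$. By Lemma \ref{lem cone} and Theorem \ref{thm cone Kahler}, such a $\delta$ is, up to a positive scalar, the class of a rational curve. I normalise it to a primitive integral class in $H_2(X,\Z)$. Minimal classes are MBM classes, so \cite{AmerikVerbitskyannallesENS} gives a constant $K>0$, independent of $\delta$, with $-K \leqslant q(\delta,\delta) < 0$ for every such primitive $\delta$. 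Here $q$ is the form on $H_2$ dual to $q_X$. Negativity holds because $\delta^\perp$ meets $\mathrm{Pos}(X)$ and $q_X$ has signature $(1,b_2-1)$ on $H^2$, hence signature $(1, h^{1,1}-1)$ on $H^{1,1}(X,\R)$.

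Next comes the finiteness argument near $\alpha$. Fix $\alpha \in \partial \Nef(X) \cap \mathrm{Pos}(X)$ and a compact neighbourhood $U \subset \mathrm{Pos}(X)$ of $\alpha$. I must show only finitely many primitive $\delta$ as above have $\delta^\perp \cap U \neq \emptyset$; in particular only finitely many walls contain $\alpha$.

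Write $\delta = \delta' + \delta''$, where $\delta'$ is the component along the $q$-dual of $\beta \in U$ and $\delta''$ lies in $\beta^{\perp_q}$, which is negative definite on the $(1,1)$-part. If $\beta \cdot \delta = 0$, then $\delta$ has no component along $\beta$ within the $(1,1)$ direction. A subtlety is that $\delta$ also has a $(2,0)+(0,2)$ component. However, $\delta$ is a curve class, so it is of type $(1,1)$ and that component vanishes.

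So $\delta$ lies in the negative definite lattice-free region $\beta^{\perp} \cap H^{1,1}$, and $-q(\delta,\delta)\leqslant K$ bounds $\delta$ in a norm that depends continuously on $\beta$. Uniformly over compact $U$, all such $\delta$ then lie in a bounded subset of $H_2(X,\R)$. They are integral, so there are only finitely many.

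The main obstacle is making the norm bound uniform. I would handle it by choosing a positive definite form $q_\beta := -q + 2\frac{q(\beta,\cdot)^2}{q(\beta,\beta)}$ on $H^{1,1}$, which varies continuously in $\beta$. Compactness of $U$ then yields a single constant $c$ with $q_\beta \geqslant c\,\|\cdot\|^2$ for all $\beta \in U$. For $\delta \perp \beta$ this gives $c\|\delta\|^2 \leqslant -q(\delta,\delta) \leqslant K$, and the finiteness follows.
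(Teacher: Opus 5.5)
Your argument is correct and is the deduction the paper delegates to \cite[Theorem 5.3]{AmerikVerbitskyannallesENS} and \cite[Remark 3.16]{BakkerLehnAGlobalTorelli}; the paper itself gives no proof beyond these citations. The deduction is that bounded squares of primitive integral MBM classes, combined with negative definiteness of $\beta^\perp \cap H^{1,1}(X,\R)$ uniformly for $\beta$ in a compact subset of $\mathrm{Pos}(X)$, leave only finitely many lattice points. One slip to correct: $q_X$ has signature $(3, b_2-3)$ on $H^2(X,\R)$, not $(1,b_2-1)$, and the signature $(1,h^{1,1}-1)$ on $H^{1,1}(X,\R)$ comes from splitting off the positive plane $(H^{2,0}\oplus H^{0,2})_\R$ --- which is exactly why your observation that $\delta$ is of type $(1,1)$ is genuinely needed.
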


As a consequence, any class $\alpha \in H^{1,1}(X, \R)$ that is big and nef but not K\"ahler is contained in a face of $\Nef(X)$ that interesects $\mathrm{Pos}(X)$. Using Verbitsky's ergodicity theorem (\cite{VerbitskyErgodic}) together with Kawamata's basepoint-free theorem, Bakker and Lehn proved the following. 

\begin{thm}[{\cite[Corollary 5.9]{BakkerLehnAGlobalTorelli}, see also \cite[Theorem 4.6]{AmerikverbitskyContractionCenters}}]\label{thm contraction BL}
    Let $X$ be a \HK manifold and let $\tau \subset H^{1,1}(X)$ be a face of $\Nef(X)$ such that $\tau \cap \mathrm{Pos}(X) \neq \emptyset$. Assume that $\dim(\tau) \geqslant 3$. Then there exists a bimeromorphic contraction $\rho : X \to X'$ with the property that a curve $C \subset X$ is contracted under $\rho$ if and only if $[C] \in \tau^\perp \subset H_2(X, \Q)$. 
\end{thm}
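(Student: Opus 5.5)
The plan is to produce the contraction on a projective deformation of $X$, where Kawamata's basepoint-free theorem applies, and then carry it back to $X$ by deformation theory. Set $\Lambda := \tau^\perp \subset H_2(X,\Q)$, identified via $q_X$ with a subspace of $H^2(X,\Q)$. By Theorem \ref{thm cone Kahler}, Lemma \ref{lem cone} and Proposition \ref{prop faces}, $\Lambda$ is spanned by finitely many classes of rational curves, which are MBM classes. Since $\tau$ meets $\mathrm{Pos}(X)$, the form $q_X$ is negative definite on $\Lambda$. Hence the orthogonal $\Lambda^\perp \subset H^2(X,\R)$ has signature $(3, b_2 - 3 - \dim\Lambda)$; it contains $H^{2,0}\oplus H^{0,2}$ together with $\tau$. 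The hypothesis $\dim(\tau)\geqslant 3$ gives $\dim \Lambda^\perp \geqslant 5$, i.e. signature $(3,k)$ with $k\geqslant 2$.

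\textbf{Step 1: the Hodge locus.} Consider the Hodge locus $\mathcal{D}_\Lambda$ in the period domain, that is, the periods orthogonal to $\Lambda$, and the corresponding component $\mathfrak{M}_\Lambda$ of the marked moduli space through $X$. This is exactly the space of deformations keeping all classes of $\Lambda$ of type $(1,1)$. MBM classes stay MBM along $\mathfrak{M}_\Lambda$, and by deformation theory of rational curves on \HK manifolds (Ran), the rational curves spanning $\Lambda$ deform over the whole locus.

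\textbf{Step 2: a projective deformation with the right face.} Apply Verbitsky's ergodicity theorem to the monodromy group acting on $\mathcal{D}_\Lambda$; this is where the signature $(3,k)$ with $k\geqslant 2$ is used. The density of orbits, together with the Amerik--Verbitsky finiteness of MBM classes up to monodromy (Proposition \ref{prop faces}), should let me find a projective $X_t \in \mathfrak{M}_\Lambda$ with two properties:
\begin{itemize}
\item[(a)] the parallel transport $\tau_t$ of $\tau$ is still a face of $\Nef(X_t)$ meeting $\mathrm{Pos}(X_t)$;
\item[(b)] $\tau_t$ contains a rational class $L$ whose null curves have classes exactly in $\Lambda$.
\end{itemize}
Then $L$ is a big and nef $\Q$-divisor on the $K$-trivial projective manifold $X_t$. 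The basepoint-free theorem gives a contraction $\rho_t : X_t\to X'_t$ contracting precisely the curves with class in $\Lambda$. This contraction is bimeromorphic because $L$ is big.

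\textbf{Step 3: deforming the contraction back to $X$.} Since $R^1\rho_{t*}\mathcal{O}=0$ and $X'_t$ has symplectic, hence rational, singularities, I would deform $X'_t$ locally trivially. I would then use that locally trivial deformations of $X'_t$ lift to deformations of $X_t$ as simultaneous resolutions. The locally trivial deformation space of $X'_t$ surjects onto the germ of $\mathfrak{M}_\Lambda$ at $t$, since both have dimension $b_2-2-\dim\Lambda$. Hence contractions exist over an open subset of $\mathfrak{M}_\Lambda$. Along a path from $t$ to $X$ this open set must be shown to be closed, so that it contains $X$. On $X$, Proposition \ref{prop HP pullback} and Lemma \ref{lem cone kahler contracté} then identify the contracted curves as exactly those with $[C]\in\tau^\perp$.

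The main obstacle is this last step, reaching $X$ itself rather than a dense set of points of $\mathfrak{M}_\Lambda$. To handle it, I would try to show that the locus where the contraction exists is closed, by taking limits of contractions. The needed control should come from boundedness of the null loci, which follows from the finiteness in Proposition \ref{prop faces}. Failing that, I would instead use ergodicity directly at the point $X$, by choosing the projective point $X_t$ inside a small neighbourhood of $X$ on which the local deformation of $\rho_t$ is already defined.
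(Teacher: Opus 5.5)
Your outline (basepoint-free theorem on a projective member of the Hodge locus $\mathfrak{M}_\Lambda$, then transport by locally trivial deformations of the contracted space and simultaneous resolution) is the strategy the paper attributes to Bakker--Lehn. The paper gives no argument of its own and simply cites \cite[Corollary 5.9]{BakkerLehnAGlobalTorelli}. There is, however, a genuine gap exactly at the step you flag: nothing in the proposal puts the contraction on $X$ itself.

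Let $U\subset\mathfrak{M}_\Lambda$ be the set of marked points carrying a contraction $\rho$ with $\rho^*H^{1,1}_{\mathrm{BC}}(X')$ equal to the transported $\tau$. Your argument shows that $U$ is open and non-empty, and, if Step 2 is done near $X$, that $X\in\overline{U}$. Neither of your remedies goes further.

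Closedness of $U$ is false in general. Let $S_0$ be a K3 surface with $\mathrm{NS}(S_0)$ spanned by $(-2)$-curves $C_1,C_2$ with $C_1\cdot C_2=1$, and put $\lambda=[C_1]+[C_2]$. Very general nearby points of the $\lambda$-Hodge locus have $\mathrm{NS}=\Z\lambda$, with $\lambda$ the class of an irreducible $(-2)$-curve that contracts to an $A_1$-point. So $S_0\in\overline{U}$. Yet $\lambda^\perp\cap\Nef(S_0)=C_1^\perp\cap C_2^\perp\cap\Nef(S_0)$ has empty interior in $\lambda^\perp$, so $S_0\notin U$ by Lemma \ref{lem cone kahler contracté}. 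The contraction lives instead on an inseparable point, namely $S_0$ with its marking twisted by the reflection in $C_1$, which sends $\lambda$ to $[C_2]$. The example also shows that, contrary to Step 1, the curves spanning $\Lambda$ need not stay irreducible over $\mathfrak{M}_\Lambda$. A closedness argument would therefore have to use the face hypothesis at the limit point, and decide on which of the inseparable marked manifolds over the limit period a limit contraction lives. Proposition \ref{prop faces} is only a finiteness statement about walls through a point of a fixed manifold; it gives no control on limits of the contracted spaces $X'_t$.

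Your second remedy is circular. The neighbourhood on which $\rho_t$ deforms is a neighbourhood of $t$ coming from $\mathrm{Def}^{lt}(X'_t)$, and its size has no lower bound as $t\to X$. Ergodicity at $P(X)$ yields at best $P(X)\in P(U)$, i.e.\ a contraction on some marked manifold inseparable from $X$, not on $X$ itself.

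What is missing is a genuinely global input. The natural one is Bakker--Lehn's global Torelli theorem for the locally trivial marked moduli space of the contracted primitive symplectic variety $X'_t$, in particular surjectivity of its period map onto the period domain of $\Lambda^\perp$. Since $b_2(X'_t)=\dim\Lambda^\perp=\dim\tau+2\geqslant 5$, this is where Verbitsky's ergodicity and the hypothesis $\dim\tau\geqslant 3$ actually enter.

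This global input gives a locally trivial deformation $X'_s$ with the period of $X$. Its simultaneous resolution $Y_s$ is then bimeromorphic to $X$, and isomorphic to it in codimension one. You must still choose $Y_s$ (equivalently, a chamber adjacent to $\tau$) so that $\rho_s^*\Kah(X'_s)$ meets $\Nef(X)$, and then descend. If the transported class $\rho_s^*\omega'$ is nef on $X$, the negativity lemma on a common resolution of $X\dashrightarrow Y_s$ shows that $X\dashrightarrow X'_s$ is a morphism contracting exactly the curves with class in $\Lambda$.

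Finally, Step 2 needs no ergodicity. By the Amerik--Verbitsky bound on squares of MBM classes, for $t$ close to $X$ in $\mathfrak{M}_\Lambda$ the only MBM walls near a point $\alpha\in\mathrm{Int}(\tau\cap\Nef(X))$ are those of classes in $\Lambda$. Hence projective $X_t$ satisfying (a) and (b) exist arbitrarily close to $X$. As written, you spend the dimension hypothesis on this local step and have nothing left for the global one, which is where it is indispensable.
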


\begin{proof}[Proof of Proposition \ref{prop cas HK}]
    We can assume that $\gamma$ is not K\"ahler and thus, by Theorem \ref{thm cone Kahler}, $N \neq 0$ and in particular $b_2(X) \geqslant 6$. By Proposition \ref{prop faces}, $\alpha$ is contained inside $\mathrm{Pos}(X)$ and lies on a face of $\Nef(X)$. Let $\tau \subset H^{1,1}(X, \R)$ be the minimal face containing $\alpha$ so that $\alpha \in \mathrm{Int}(\Nef(X) \cap \tau)$. We claim that $\tau = N^\perp$. Indeed, on the one hand we have that $\tau$ is an intersection of hyperplanes of the form $[C]^\perp$ for rational curves $C$ such that $[C] \cdot \alpha = 0$, and thus $N^\perp \subset \tau$. On the other hand, if $C$ is rational curve such that $[C]^\perp \cdot \alpha = 0$, then by Lemma \ref{lem cone} we have $\tau \subset [C]^\perp$, thus $\tau \subset N^\perp$. By our hypothesis, this implies that $\dim(\tau) = h^{1,1}(X) - \dim(N) \geqslant 3$. By Theorem \ref{thm contraction BL} there exists a contraction $\rho : X \to X'$ that contracts exactly the curves whose class lies inside $N$ and the result then follows from Lemma \ref{lem cone kahler contracté}.
\end{proof}

\begin{rmk}\label{rmk module}
    In \cite{BakkerLehnAGlobalTorelli} the authors prove a global Torelli theorem for primitive symplectic varieties admitting a crepant resolution. They obtain in particular that every component of the locally trivial marked moduli space is isomorphic to a refined period domain, the isomorphism being given by assigning to a marked pair $(X, \varphi)$ its period together with the chamber of a certain "K\"ahler type" chamber decomposition containing $\varphi(\Kah(X))$ (\cite[Proposition 5.21]{BakkerLehnAGlobalTorelli}). They however note that their proof doesn't imply that the chamber in question coincides with the image of the K\"ahler cone of the variety. The proof of Proposition \ref{prop cas HK} implies that this is indeed the case ; the chambers of the K\"ahler type decomposition do correspond to the K\"ahler cones of the parametrized varieties, making the situation completely analogous to that of \HK manifolds (\cite[Theorem 5.16]{MarkmanSurvey}).
\end{rmk}

\bibliographystyle{amsalpha}
\bibliography{biblio.bib}

\providecommand{\bysame}{\leavevmode\hbox to3em{\hrulefill}\thinspace}
\providecommand{\MR}{\relax\ifhmode\unskip\space\fi MR }
\providecommand{\MRhref}[2]{%
  \href{http://www.ams.org/mathscinet-getitem?mr=#1}{#2}
}
\providecommand{\href}[2]{#2}
\begin{thebibliography}{BCHM10}

\bibitem[AV15]{AmerikVerbitsky}
Ekaterina Amerik and Misha Verbitsky, \emph{{Rational Curves on Hyperkähler Manifolds}}, International Mathematics Research Notices \textbf{2015} (2015), no.~23, 13009--13045.

\bibitem[AV17]{AmerikVerbitskyannallesENS}
\bysame, \emph{Morrison-{Kawamata} cone conjecture for hyperk{\"a}hler manifolds}, Ann. Sci. {\'E}c. Norm. Sup{\'e}r. (4) \textbf{50} (2017), no.~4, 973--993.

\bibitem[AV21]{AmerikverbitskyContractionCenters}
\bysame, \emph{Contraction centers in families of hyperk{\"a}hler manifolds}, Sel. Math., New Ser. \textbf{27} (2021), no.~4, 26.

\bibitem[BCHM10]{BCHM}
Caucher Birkar, Paolo Cascini, Christopher~D. Hacon, and James McKernan, \emph{Existence of minimal models for varieties of log general type}, J. Am. Math. Soc. \textbf{23} (2010), no.~2, 405--468.

\bibitem[Bea83]{BeauvillePremièreClasse}
Arnaud Beauville, \emph{Vari{\'e}t{\'e}s k{\"a}hleriennes dont la premi{\`e}re classe de {Chern} est nulle}, J. Differ. Geom. \textbf{18} (1983), 755--782 (French).

\bibitem[BG13]{BoucksomGuedjRegularizing}
S{\'e}bastien Boucksom and Vincent Guedj, \emph{Regularizing properties of the {K{\"a}hler}-{Ricci} flow}, An introduction to the K\"ahler-Ricci flow. Selected papers based on the presentations at several meetings of the ANR project MACK, Cham: Springer, 2013, pp.~189--237.

\bibitem[BL21]{BakkerLehnAGlobalTorelli}
Benjamin Bakker and Christian Lehn, \emph{A global {Torelli} theorem for singular symplectic varieties}, J. Eur. Math. Soc. (JEMS) \textbf{23} (2021), no.~3, 949--994.

\bibitem[Bou01]{BoucksomConeHK}
S{\'e}bastien Boucksom, \emph{The {K{\"a}hler} cone of a hyperk{\"a}hler manifold}, C. R. Acad. Sci., Paris, S{\'e}r. I, Math. \textbf{333} (2001), no.~10, 935--938 (French).

\bibitem[Car57]{CartanQuotientEspace}
Henri Cartan, \emph{Quotient d'un espace analytique par un groupe d'automorphismes}, Princeton Math. Ser. \textbf{12} (1957), 90--102 (French).

\bibitem[Dem85]{DemaillyMesureDeMA}
Jean-Pierre Demailly, \emph{M{\'e}sures de {Monge}-{Amp{\`e}re} et caract{\'e}risation g{\'e}om{\'e}trique des vari{\'e}t{\'e}s alg{\'e}briques affines}, M{\'e}m. Soc. Math. Fr., Nouv. S{\'e}r., vol.~19, Soci{\'e}t{\'e} Math{\'e}matique de France (SMF), Paris, 1985.

\bibitem[DH24]{DasHaconTranscendentalMMPforProj}
Omprokash Das and Christopher Hacon, \emph{Transcendental {Minimal} {Model} {Program} for {Projective} {Varieties}}, Preprint, {arXiv}:2412.07650 [math.{AG}] (2024), 2024.

\bibitem[DHP24]{DasHaconPaun4-dimensional}
Omprokash Das, Christopher Hacon, and Mihai P{\u{a}}un, \emph{On the 4-dimensional minimal model program for {K{\"a}hler} varieties}, Adv. Math. \textbf{443} (2024), 68, Id/No 109615.

\bibitem[FT18]{FilipTosatti}
Simion Filip and Valentino Tosatti, \emph{Smooth and {Rough} {Positive} {Currents}}, Annales de l'Institut Fourier \textbf{68} (2018), no.~7, 2981--2999.

\bibitem[Fuj22]{FujinoMMPForProjMor}
Osamu Fujino, \emph{Minimal model program for projective morphisms between complex analytic spaces}, Preprint, {arXiv}:2201.11315 [math.{AG}] (2022), 2022.

\bibitem[GH78]{GriffithsHarrisPrinciples}
Phillip Griffiths and Joseph Harris, \emph{Principles of algebraic geometry}, Pure and {Applied} {Mathematics}. {A} {Wiley}-{Interscience} {Publication}. {New} {York} etc.: {John} {Wiley} \& {Sons}. {XII}, 813 p., 1978.

\bibitem[GK20]{GrafKirschnerFiniteQuotients}
Patrick Graf and Tim Kirschner, \emph{Finite quotients of three-dimensional complex tori}, Ann. Inst. Fourier \textbf{70} (2020), no.~2, 881--914 (English).

\bibitem[GR84]{GrauertRemmertCohAnSheaves}
Hans Grauert and Reinhold Remmert, \emph{Coherent analytic sheaves}, Grundlehren Math. Wiss., vol. 265, Springer, Cham, 1984.

\bibitem[H{\"o}r21]{HoringAdjoint}
A.~H{\"o}ring, \emph{Adjoint {{\((1,1)\)}}-classes on threefolds}, Izv. Math. \textbf{85} (2021), no.~4, 823--830.

\bibitem[HP16]{HoringPeternellMMP3fold}
Andreas H{\"o}ring and Thomas Peternell, \emph{Minimal models for {K{\"a}hler} threefolds}, Invent. Math. \textbf{203} (2016), no.~1, 217--264.

\bibitem[Huy99]{HuybrechtsBasicResults}
Daniel Huybrechts, \emph{Compact hyperk{\"a}hler manifolds: basic results}, Inventiones mathematicae \textbf{135} (1999), no.~1, 63--113.

\bibitem[Huy03]{HuybrechtsKahlerCone}
\bysame, \emph{The {K{\"a}hler} cone of a compact hyperk{\"a}hler manifold}, Math. Ann. \textbf{326} (2003), no.~3, 499--513.

\bibitem[Mar11]{MarkmanSurvey}
Eyal Markman, \emph{{A survey of Torelli and monodromy results for holomorphic-symplectic varieties}}, Complex and Differential Geometry, Springer Berlin Heidelberg, 2011, pp.~257--322.

\bibitem[Tos09]{TosattiLimitsOfCY}
Valentino Tosatti, \emph{Limits of {Calabi}-{Yau} metrics when the {K{\"a}hler} class degenerates}, J. Eur. Math. Soc. (JEMS) \textbf{11} (2009), no.~4, 755--776.

\bibitem[Tos25]{tosatti2025ricciflatmetricscalabiyaumanifolds}
Valentino Tosatti, \emph{Ricci-flat metrics on {Calabi-Yau} manifolds}, 2025.

\bibitem[Var84]{VarouchasInvent}
J.~Varouchas, \emph{Stabilit{\'e} de la classe des vari{\'e}t{\'e}s {Kaehleriennes} par certains morphismes propres}, Invent. Math. \textbf{77} (1984), 117--127 (French).

\bibitem[Var89]{VarouchasMathAnn}
Jean Varouchas, \emph{K{\"a}hler spaces and proper open morphisms}, Math. Ann. \textbf{283} (1989), no.~1, 13--52.

\bibitem[Ver15]{VerbitskyErgodic}
Misha Verbitsky, \emph{Ergodic complex structures on hyperk{\"a}hler manifolds}, Acta Math. \textbf{215} (2015), no.~1, 161--182.

\bibitem[Ver17]{VerbitskyErgodicErratum}
Misha Verbitsky, \emph{Ergodic complex structures on hyperkahler manifolds: an erratum}, Preprint, {arXiv}:1708.05802 [math.{AG}] (2017), 2017.

\end{thebibliography}

\end{document}